\documentclass[12pt]{amsart}
\usepackage{amsmath, amsthm, amssymb, enumerate, color,amsxtra}

\addtolength{\topmargin}{.5in}
\addtolength{\textheight}{.75in}

\bibliographystyle{amsplain}

\usepackage[colorlinks=true, pdfstartview=FitV, linkcolor=blue, citecolor=blue, urlcolor=blue]{hyperref}

\makeatletter
\def\Ddots{\mathinner{\mkern1mu\raise\p@
\vbox{\kern7\p@\hbox{.}}\mkern2mu
\raise4\p@\hbox{.}\mkern2mu\raise7\p@\hbox{.}\mkern1mu}}
\makeatother

\def\Xint#1{\mathchoice
{\XXint\displaystyle\textstyle{#1}}%
{\XXint\textstyle\scriptstyle{#1}}%
{\XXint\scriptstyle\scriptscriptstyle{#1}}%
{\XXint\scriptscriptstyle\scriptscriptstyle{#1}}%
\!\int}
\def\XXint#1#2#3{{\setbox0=\hbox{$#1{#2#3}{\int}$}
\vcenter{\hbox{$#2#3$}}\kern-.5\wd0}}

\def\dashint{\Xint-}

\newcounter{mycounter}


\newtheorem{theorem}{Theorem}[section]

\newtheorem{corollary}[theorem]{Corollary}

\theoremstyle{definition}
\newtheorem{definition}[theorem]{Definition}
\newtheorem{example}[theorem]{Example}

\newtheorem{lemma}[theorem]{Lemma}

\newtheorem{question}[theorem]{Question}

\newtheorem{remark}[theorem]{Remark}

\setlength{\topmargin}{-0.2in} \setlength{\textwidth}{5in}

\def\loc{\text{loc}}

\def\al{{\alpha}}
\def\R{\mathbb R}

\def\T{\mathbb T}

\def\C{\mathbb C}
\def\W{\mathcal W}
\def\X{\mathcal X}
\def\ra{\rightarrow}
\def\bey{\begin{eqnarray*}}
\def\eey{\end{eqnarray*}}
 
 \allowdisplaybreaks

\def\D{{\mathbb D}}

\def\A{{\mathcal A}}
\def\M{{\mathcal M}}

\def\B{{\mathcal B}}

\begin{document}

\title{Unions of Lebesgue spaces and $A_1$ majorants}
\date{\today}

\author{Greg Knese}
\address{GK: Washington University in St. Louis, Department of
  Mathematics, St. Louis, MO}
\email{geknese@math.wustl.edu}

\author{John E. M$^\text{c}$Carthy}
\address{JM: Washington University in St. Louis, Department of
  Mathematics, St. Louis, MO}
\email{mccarthy@wustl.edu}

\author{Kabe Moen}
\address{KM: University of Alabama, Department of Mathematics, Tuscaloosa, AL}
\email{kmoen@as.ua.edu}

\thanks{GK partially supported by NSF grant DMS-1363239}
\thanks{JM partially supported by NSF grant DMS-1300280}
\thanks{KM partially supported by NSF grant DMS-1201504}

\subjclass[2010]{42B25, 42B35, 46E30, 30H10, 30H15}

\maketitle

\begin{abstract} We study two questions.  When does a function belong to the union of Lebesgue spaces and when does a function have an $A_1$ majorant?   We show these questions are fundamentally related.  For functions restricted to a fixed cube we prove that the following are equivalent: a function belongs to $L^p$ for some $p>1$; the function has an $A_1$ majorant; for any $p>1$ the function belongs to $L^p_w$ for some $A_p$ weight $w$.  We also examine the case of functions defined on $\R^n$ and give characterizations of the union of $L^p_w$ over $w\in A_p$ and when a function has an $A_1$ majorant on all of $\R^n$.

\end{abstract}

\section{Introduction and statement of the main results}
 While the $L^p$ spaces are considered
    fundamental spaces of interest in analysis, the weighted $L^p$
    spaces and the related study of $A_p$ weights are perhaps part of
    a more specialized area of analysis.  It is the goal of this
    article to show that the $L^p$ spaces considered in aggregate are
    intimately linked to these latter topics and to the notion of an
    $A_1$ majorant. 
    
We begin with the following question.  
 \begin{question} \label{ques:unionLp} When does a function belong to the union of $L^p$ spaces?
\end{question}
\noindent Question \ref{ques:unionLp} is vaguely stated on purpose.  By union,
we will either mean the union of $L^p$ as $p$ varies or the union of $L^p_w$ for a fixed $p$ and $w$ varying.  The union of $L^p$ spaces often arises when considering a general domain to define operators in harmonic analysis.  Several such operators are bounded on $L^p$ for all $1<p<\infty$, hence will take functions from $\bigcup_{p>1}L^p$ into itself.  
 
It turns out Question \ref{ques:unionLp} is closely related to the theory of weighted Lebesgue spaces and the action of the Hardy-Littlewood maximal operator on these spaces.  For our purposes, a weight is a positive locally integrable function.  An $A_1$ weight is one that satisfies 
$$Mw\leq Cw, \qquad a.e.$$
Here $M$ denotes the Hardy-Littlewood maximal operator 
$$Mf(x)=\sup_{Q\ni x}\frac{1}{|Q|}\int_Q |f|\,dx.$$ We exclude the
weight $w\equiv 0$ from belonging to $A_1$, and in this case we see
that if $w\in A_1$ then $w>0$ a.e.  The $A_1$ class of weights
characterizes when $M$ maps $L^1_w$ into $L^{1,\infty}_w$.  When
$1<p<\infty$, $M$ is bounded on $L^p_w$ exactly when $w\in A_p$:
$$ \Big(\frac{1}{|Q|}\int_Q w\,dx\Big)\Big(\frac{1}{|Q|}\int_Q
w^{-\frac{1}{p-1}}\,dx\Big)^{p-1}\leq C$$ for all cubes $Q$.  At the other
endpoint the $A_\infty$ class is defined to be the union of all $A_p$
for $p\geq 1$.  We now come to our second question.

\begin{question} \label{ques:A1maj} 
Given a measurable function, $f$, when does there exist an $A_1$
weight, $w$, such that
\begin{equation}\label{eqn:A1maj} |f|\leq w?\end{equation}
\end{question}

We call a weight satisfying \eqref{eqn:A1maj} an $A_1$ majorant
of $f$ and write $\M_{A_1}$ for the set of measurable functions
possessing an $A_1$ majorant.  As stated, Question \ref{ques:A1maj}
does not seem to have been considered before.  As far as we can tell,
the first notion of an $A_1$ majorant appeared in the 2012 article by
Rutsky \cite{Rut}.  In this paper a different definition of an $A_1$
majorant is given---one which requires the function and the weight to
{\it a priori} belong to a more restrictive class of functions.

 It turns out our problems split into two immediate cases: the local and global case.
Because of this we will use the notation $L^p$, $L^p_w$, $M_{A_1}$,
etc., when the domain does not matter and the notation $L^p(\Omega),
L^p_w(\Omega), \M_{A_1}(\Omega)$ for a fixed domain $\Omega$.  In
general, we will take $\Omega=Q$ where $Q$ is a cube in $\R^n$ (the
local case) or $\Omega=\R^n$ (the global case).  In the local case our
problem has a remarkably simple answer---one which reveals a close
connection between traditional $L^p$ spaces, weighted $L^p$ spaces,
and $A_1$ majorants.
 
\begin{theorem}\label{thm:localmain} 
Suppose $Q$ is a cube in $\R^n$ and $p_0$ is an exponent satisfying
$1<p_0<\infty$.  Then
$$\M_{A_1}(Q)=\bigcup_{p>1} L^p(Q)=\bigcup_{w\in
    A_{p_0}}L^{p_0}_w(Q).$$
\end{theorem}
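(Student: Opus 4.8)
The plan is to prove both equalities at once by establishing a cycle of three inclusions,
$$\bigcup_{p>1}L^p(Q)\subseteq \M_{A_1}(Q)\subseteq \bigcup_{w\in A_{p_0}}L^{p_0}_w(Q)\subseteq \bigcup_{p>1}L^p(Q),$$
carried out for an arbitrary but fixed $p_0\in(1,\infty)$. Each link rests on one classical fact about weights, and no further machinery seems to be needed.

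For the first inclusion I would invoke the Coifman--Rochberg construction of $A_1$ weights from maximal functions. Given $f\in L^p(Q)$ with $p>1$, set $g=|f|^p\in L^1(Q)$, so the local (cube) maximal function $Mg$ is finite a.e. Then $w:=(Mg)^{1/p}$ majorizes $|f|$, since $Mg\ge g=|f|^p$ a.e.\ by the Lebesgue differentiation theorem, and $w\in A_1(Q)$ because $(Mg)^{\delta}\in A_1$ for every $0\le\delta<1$ and here $\delta=1/p<1$. This produces an $A_1$ majorant of $f$. The second inclusion is the most elementary: if $|f|\le w$ with $w\in A_1(Q)$, take the dual weight $v:=w^{1-p_0}$. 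A one-line computation from $\frac{1}{|Q'|}\int_{Q'}w\le C\,\operatorname{ess\,inf}_{Q'}w$ shows $v\in A_{p_0}(Q)$ (this is the easy direction of Jones factorization, with trivial numerator weight). Moreover $|f|^{p_0}v\le w^{p_0}w^{1-p_0}=w$, which is integrable over the bounded cube $Q$ because $A_1\subset L^1_{\loc}$; hence $f\in L^{p_0}_v(Q)$.

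The third inclusion is where the real work lies, and I expect it to be the main obstacle. Suppose $f\in L^{p_0}_v(Q)$ with $v\in A_{p_0}(Q)$. For $p\in(1,p_0)$ I would apply H\"older's inequality with exponents $p_0/p$ and $p_0/(p_0-p)$ to the factorization $|f|^p=(|f|^{p_0}v)^{p/p_0}\,v^{-p/p_0}$, obtaining
$$\int_Q|f|^p\,dx\le \Big(\int_Q|f|^{p_0}v\,dx\Big)^{p/p_0}\Big(\int_Q v^{-p/(p_0-p)}\,dx\Big)^{(p_0-p)/p_0}.$$
The first factor is finite, so the whole question reduces to the integrability of a negative power of $v$. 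The dual weight $\sigma:=v^{-1/(p_0-1)}$ lies in $A_{p_0'}(Q)\subset A_\infty$, and therefore satisfies a reverse H\"older inequality: $\sigma^{1+\ep}\in L^1(Q)$ for some $\ep>0$. Since $\frac{p}{p_0-p}\to\frac{1}{p_0-1}$ as $p\to1^+$, for $p$ sufficiently close to $1$ we have $\frac{p}{p_0-p}\le\frac{1+\ep}{p_0-1}$, so $v^{-p/(p_0-p)}$ is a power below $1$ of the $L^1$ function $\sigma^{1+\ep}$ and hence again integrable on the finite-measure cube $Q$. Thus $\int_Q|f|^p<\infty$ and $f\in L^p(Q)$.

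The crux of the whole argument is precisely the self-improvement (openness) of the $A_{p_0}$ condition via reverse H\"older used in the last step; the Coifman--Rochberg step is the other substantive ingredient, while the middle inclusion is a formal manipulation. I would also take some care to phrase the maximal operator and the weight classes with respect to subcubes of $Q$ throughout, so that the local versions of Coifman--Rochberg and of reverse H\"older apply verbatim. Since $p_0\in(1,\infty)$ was arbitrary and the outer two sets do not depend on $p_0$, closing the cycle simultaneously yields the equality of the middle set with them for every such $p_0$.
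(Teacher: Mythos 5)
Your proof is correct and follows essentially the same route as the paper's: there the result is the $r=1$ case of a more general theorem proved by the identical three-link cycle (Coifman--Rochberg for $\bigcup_{p>1}L^p(Q)\subseteq\M_{A_1}(Q)$, the factorization $w^{1-p_0}\in A_{p_0}$ for the middle inclusion, and H\"older combined with reverse H\"older for the dual weight $\sigma=w^{1-p_0'}$ for the last). The only cosmetic difference is that the paper pins down the target exponent explicitly via $\tfrac1q=\tfrac1{q_0}+\tfrac1{sq_0'}$ rather than letting $p\to 1^+$.
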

The proof, as with most proofs in this article, is a
    synthesis of known important results; in this case the reverse
    H\"older inequality, a result of Coifman and Rochberg \cite{CR}
    (Theorem \ref{thm:coifroch} below), and basic properties of $A_p$
    weights. 
    The second equality in Theorem \ref{thm:localmain} reinforces the
    saying attributed to Antonio C\'ordoba ``there are no $L^p$ spaces
    only weighted $L^2$ spaces".

Theorem \ref{thm:localmain} has several extensions.  First, a function
may not have an $A_1$ majorant, but a power of it may.  Given $r>0$ we
define the class $\M_{A_1}^r$ to be the set of functions such that $|f|^r$
has an $A_1$ majorant (note: $\M_{A_1}=\M_{A_1}^1$).  We have the more
general result which implies Theorem \ref{thm:localmain} by taking $r=1$.

 \begin{theorem}\label{thm:localmainr} 
Let $Q$ be a cube in $\R^n$ and $r,p_0$ satisfy $0<r<p_0<\infty$.
Then
$$\M^r_{A_1}(Q)=\bigcup_{p>r} L^p(Q)=\bigcup_{w\in A_{\frac{p_0}{r}}}L^{p_0}_w(Q).$$
\end{theorem}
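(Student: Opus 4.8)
The plan is to deduce Theorem \ref{thm:localmainr} from the already-stated Theorem \ref{thm:localmain} by a change of variables. Each of the three classes in the statement depends on $f$ only through $|f|$, so I set $g=|f|^r$ and $q_0=p_0/r$. The hypothesis $0<r<p_0<\infty$ guarantees $1<q_0<\infty$, which is exactly the range of exponents covered by Theorem \ref{thm:localmain}. The whole proof consists of checking that under $f\mapsto g$ each class in Theorem \ref{thm:localmainr} corresponds to the analogous class in Theorem \ref{thm:localmain} (with $p_0$ replaced by $q_0$), and then invoking that theorem.

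First, the majorant class. By definition $f\in\M^r_{A_1}(Q)$ means $g=|f|^r$ has an $A_1$ majorant, i.e.\ $g\in\M_{A_1}(Q)$. By Theorem \ref{thm:localmain}, $\M_{A_1}(Q)=\bigcup_{q>1}L^q(Q)$. Since $g\in L^q(Q)$ is equivalent to $\int_Q|f|^{rq}\,dx<\infty$, that is $f\in L^{rq}(Q)$, and since $rq$ runs over $(r,\infty)$ as $q$ runs over $(1,\infty)$, this yields the first equality $\M^r_{A_1}(Q)=\bigcup_{p>r}L^p(Q)$.

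For the second equality I apply the weighted part of Theorem \ref{thm:localmain} with exponent $q_0=p_0/r\in(1,\infty)$, which gives $\bigcup_{q>1}L^q(Q)=\bigcup_{w\in A_{q_0}}L^{q_0}_w(Q)$. The substitution $g=|f|^r$ identifies the weighted spaces directly: for any weight $w$ one has $\int_Q|f|^{p_0}w\,dx=\int_Q|g|^{q_0}w\,dx$, so $f\in L^{p_0}_w(Q)$ if and only if $g\in L^{q_0}_w(Q)$, with the \emph{same} weight $w$, and $A_{p_0/r}=A_{q_0}$. Hence $f\in\bigcup_{w\in A_{p_0/r}}L^{p_0}_w(Q)$ if and only if $g\in\bigcup_{w\in A_{q_0}}L^{q_0}_w(Q)=\bigcup_{q>1}L^q(Q)$, and by the computation of the previous paragraph the latter is exactly $f\in\bigcup_{p>r}L^p(Q)$. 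This proves $\bigcup_{p>r}L^p(Q)=\bigcup_{w\in A_{p_0/r}}L^{p_0}_w(Q)$.

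There is no substantive obstacle here: the only points requiring care are the bookkeeping of exponent ranges (the strict inequality $r<p_0$ is precisely what makes $q_0>1$, so that Theorem \ref{thm:localmain} applies) and the observation that the weight class transforms as $A_{p_0/r}\to A_{q_0}$ with $w$ itself unchanged. In fact the argument shows the two theorems are equivalent rather than one merely implying the other: taking $r=1$ recovers Theorem \ref{thm:localmain}, while the reduction above recovers the general case. Alternatively one could give a direct proof paralleling that of Theorem \ref{thm:localmain}—the reverse H\"older inequality for the inclusion $\bigcup_{p>r}L^p(Q)\subseteq\M^r_{A_1}(Q)$, the Coifman--Rochberg construction of $A_1$ weights for the reverse containment, and standard $A_p$ factorization for the weighted identity—but the change-of-variables reduction is cleaner and avoids repeating those arguments.
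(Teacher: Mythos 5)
Your change-of-variables reduction is mathematically sound as far as it goes: the substitution $g=|f|^r$ does carry each of the three classes in Theorem \ref{thm:localmainr} onto the corresponding class in Theorem \ref{thm:localmain} with exponent $q_0=p_0/r$, and your bookkeeping of exponents and weights (in particular that the weight $w$ itself is unchanged under the substitution) is correct. The problem is that the argument is circular relative to the paper's logical structure. Theorem \ref{thm:localmain} is not an independently established result here: the paper states explicitly that Theorem \ref{thm:localmainr} implies Theorem \ref{thm:localmain} by taking $r=1$, and the \emph{only} proof of Theorem \ref{thm:localmain} in the paper is as that special case. So deducing Theorem \ref{thm:localmainr} from Theorem \ref{thm:localmain} proves nothing; what you have actually shown is that the two statements are equivalent, which leaves the entire proof obligation undischarged --- someone still has to prove one of them from scratch.

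The paper does this by proving the chain of containments
$$\bigcup_{w\in A_{p_0/r}} L^{p_0}_w(Q)\subset \bigcup_{p>r}L^p(Q)\subset \M_{A_1}^r(Q)\subset\bigcup_{w\in A_{p_0/r}} L^{p_0}_w(Q)$$
directly: the first containment uses the reverse H\"older inequality applied to the dual weight $\sigma=w^{1-q_0'}$ (giving $\sigma\in L^s(Q)$ for some $s>1$, hence an exponent $p>r$ with $f\in L^p(Q)$ via H\"older's inequality); the second uses the Coifman--Rochberg theorem through the majorant $|f|^r\leq M_Q(|f|^p)^{r/p}\in A_1(Q)$; and the third uses the factorization property that $w^{1-q_0}\in A_{q_0}(Q)$ for $w\in A_1(Q)$ together with the trivial bound $\int_Q g^{q_0}w^{1-q_0}\,dx\leq \int_Q w\,dx$. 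Your closing sentence gestures at exactly these three ingredients but assigns them to the wrong containments (the reverse H\"older inequality is not what places $\bigcup_{p>r}L^p(Q)$ inside $\M^r_{A_1}(Q)$; that step is Coifman--Rochberg) and supplies no details, so it cannot stand in for the missing argument. To repair the proposal, either prove Theorem \ref{thm:localmain} directly and then run your (correct) reduction, or prove Theorem \ref{thm:localmainr} directly along the lines above.
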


We further extend the theory to $A_\infty$ weights.

 \begin{theorem}\label{thm:localmaininfty} 
Let $Q$ be a cube in $\R^n$ and $p_0$, $0<p_0<\infty$.  Then
$$\bigcup_{r>0} \M^r_{A_1}(Q)=\bigcup_{p>0} L^p(Q)=\bigcup_{w\in A_{\infty}}L^{p_0}_w(Q).$$
\end{theorem}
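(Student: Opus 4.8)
The plan is to deduce this three-way equality directly from Theorem \ref{thm:localmainr}, which for each fixed $r$ with $0<r<p_0$ already identifies
$$\M^r_{A_1}(Q)=\bigcup_{p>r} L^p(Q)=\bigcup_{w\in A_{p_0/r}}L^{p_0}_w(Q).$$
The idea is that each of the three sets appearing in the present statement is obtained from one of these by taking a union over $r\in(0,p_0)$, so that the argument is purely set-theoretic bookkeeping on the index sets, with no new analysis required. Note that the characterization $\M^r_{A_1}(Q)=\bigcup_{p>r}L^p(Q)$ is valid for \emph{every} $r>0$, since in applying Theorem \ref{thm:localmainr} one is free to choose any auxiliary exponent exceeding $r$.

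First I would record that $r\mapsto \M^r_{A_1}(Q)$ is decreasing: if $r'<r$ then $\bigcup_{p>r}L^p(Q)\subseteq\bigcup_{p>r'}L^p(Q)$, so $\M^r_{A_1}(Q)\subseteq \M^{r'}_{A_1}(Q)$, and hence $\bigcup_{r>0}\M^r_{A_1}(Q)=\bigcup_{0<r<p_0}\M^r_{A_1}(Q)$; this matches the first set in the statement with the union of the first sets in Theorem \ref{thm:localmainr}. Next, for the middle set, one checks the elementary identity $\bigcup_{0<r<p_0}\bigcup_{p>r}L^p(Q)=\bigcup_{p>0}L^p(Q)$: the inclusion $\subseteq$ is clear, and given $f\in L^p(Q)$ for some $p>0$ one takes $r=\tfrac12\min(p,p_0)$ to obtain $\supseteq$. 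The only genuinely structural point is the right-hand set, where the key identity is
$$\bigcup_{0<r<p_0}A_{p_0/r}=\bigcup_{q>1}A_q=A_\infty.$$
Here the first equality holds because $r\mapsto p_0/r$ maps $(0,p_0)$ onto $(1,\infty)$, and the second is exactly the definition of $A_\infty$ as $\bigcup_{q\geq 1}A_q$ together with the containment $A_1\subseteq A_q$ for every $q>1$. Combining these three observations and taking the union over $0<r<p_0$ of the equalities in Theorem \ref{thm:localmainr} yields the claim.

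I do not expect a serious obstacle, since the analytic content is entirely contained in Theorem \ref{thm:localmainr}; the only thing to verify carefully is the identity $\bigcup_{0<r<p_0}A_{p_0/r}=A_\infty$ and the monotonicity of $\M^r_{A_1}(Q)$, so that the formal exchange of the union over $r$ with the equalities is justified. If one preferred a self-contained argument for the equality $\bigcup_{p>0}L^p(Q)=\bigcup_{w\in A_\infty}L^{p_0}_w(Q)$, the inclusion from right to left would follow from H\"older's inequality together with the fact that an $A_q$ weight has $w^{-1/(q-1)}\in L^1(Q)$: writing $|f|^{p_0/q}=(|f|^{p_0}w)^{1/q}w^{-1/q}$ and applying H\"older with exponents $q$ and $q/(q-1)$ places $f$ in $L^{p_0/q}(Q)$, while the reverse inclusion can be handled by producing a weight tailored to $f$ through Theorem \ref{thm:localmainr}.
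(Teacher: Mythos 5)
Your argument is correct. The identity $\bigcup_{0<r<p_0}A_{p_0/r}=\bigcup_{q>1}A_q=A_\infty$ is right, the monotonicity $r'<r\Rightarrow\M^r_{A_1}(Q)\subseteq\M^{r'}_{A_1}(Q)$ (which you need to shrink the union over all $r>0$ to $r\in(0,p_0)$) is correctly justified -- it also follows directly from Theorem \ref{thm:properties}(iii), since $|f|^{r}\leq w$ gives $|f|^{r'}\leq w^{r'/r}\in A_1$ -- and your observation that the first equality of Theorem \ref{thm:localmainr} holds for every $r>0$ by choosing an auxiliary $p_0>r$ is sound. The route is close to the paper's but organized differently: the paper proves the two equalities separately, arguing the first one from scratch ($|f|^r\leq w\in A_1(Q)\subset L^1(Q)$ gives $f\in L^r(Q)$ for one inclusion, and $|f|^r\leq M_Q(|f|^p)^{r/p}\in A_1$ via Coifman--Rochberg for the other) and, for the second, using Theorem \ref{thm:localmainr} in one direction and a direct H\"older estimate against the dual weight $w^{1-q'}\in L^1(Q)$ in the other (essentially the computation you sketch at the end). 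Your version pushes all the analysis into Theorem \ref{thm:localmainr} and reduces the present theorem to bookkeeping on index sets, which is cleaner and makes transparent that no new analytic input is needed; the paper's version is marginally more self-contained in that it exhibits the explicit majorant $M_Q(|f|^p)^{r/p}$ and the explicit H\"older factorization rather than routing them through the earlier theorem. Either write-up is acceptable.
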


One may inquire about $A_p$ majorants for $p>1$ or $A_\infty$
majorants, that is, given a function when does there exists $w\in A_p$, $1<p\leq \infty$, with $|f|\leq w$.  We denote such classes of
functions as $\M_{A_p}$ or $\M_{A_\infty}$.  Since the $A_p$ classes
are nested we have
$$\M_{A_1}\subset \M_{A_p}\subset \M_{A_q}\subset \M_{A_\infty}$$ for
$1\leq p\leq q\leq \infty$.  Once again in the local case we have the
following nice characterization.

 \begin{theorem}\label{thm:localApmajor} If $Q$ is a cube in $\R^n$ then
 $$\M_{A_1}(Q)=\M_{A_\infty}(Q).$$
\end{theorem}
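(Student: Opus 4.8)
The plan is to use Theorem \ref{thm:localmain} to reduce the claim to a statement about the integrability of $A_\infty$ weights on a fixed cube. The inclusion $\M_{A_1}(Q)\subseteq\M_{A_\infty}(Q)$ is immediate, since $A_1\subseteq A_\infty$ and any $A_1$ majorant is in particular an $A_\infty$ majorant; indeed this is already recorded in the nesting $\M_{A_1}\subset\M_{A_p}\subset\M_{A_q}\subset\M_{A_\infty}$ noted above. Thus the entire content of the theorem is the reverse inclusion $\M_{A_\infty}(Q)\subseteq\M_{A_1}(Q)$.

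To establish this, I would take $f\in\M_{A_\infty}(Q)$ and fix an $A_\infty$ majorant $w$, so that $|f|\le w$ on $Q$ and $w\in A_{p_0}$ for some finite $p_0\ge 1$ (by the definition of $A_\infty$ as $\bigcup_{p\ge 1}A_p$). The key step is the reverse H\"older inequality for $A_p$ weights: there exist constants $C$ and $\delta>0$, depending only on $w$ and the dimension, such that
$$\Big(\frac{1}{|Q|}\int_Q w^{1+\delta}\,dx\Big)^{\frac{1}{1+\delta}}\le \frac{C}{|Q|}\int_Q w\,dx.$$
Since $w$ is locally integrable the right-hand side is finite, so $w\in L^{1+\delta}(Q)$ with $1+\delta>1$. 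From $|f|\le w$ we conclude $f\in L^{1+\delta}(Q)\subseteq\bigcup_{p>1}L^p(Q)$.

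Finally, by Theorem \ref{thm:localmain} we have $\bigcup_{p>1}L^p(Q)=\M_{A_1}(Q)$, so $f\in\M_{A_1}(Q)$, completing the reverse inclusion and hence the proof. There is no serious obstacle here: the argument is a direct synthesis of the reverse H\"older inequality (which upgrades an $A_\infty$ weight to membership in some $L^q(Q)$ with $q>1$) and the already-established characterization $\M_{A_1}(Q)=\bigcup_{p>1}L^p(Q)$. The only point requiring mild care is to invoke reverse H\"older in the form valid for all $A_p$, $1\le p<\infty$ (equivalently for all $A_\infty$ weights), rather than only for $A_1$, and to observe that on the \emph{fixed} cube $Q$ this genuinely places $w$, and therefore $f$, in $L^{1+\delta}(Q)$.
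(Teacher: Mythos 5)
Your proof is correct, and there is no circularity in invoking Theorem \ref{thm:localmain}, since that result is established independently of Theorem \ref{thm:localApmajor}. The key tool is the same as in the paper---the reverse H\"older inequality for $A_\infty$ weights---but the packaging differs. You apply reverse H\"older on the fixed cube $Q$ itself to conclude $w\in L^{1+\delta}(Q)$ (here it matters that a weight on a cube is by convention in $L^1(Q)$, so the right-hand side of the reverse H\"older estimate is finite), deduce $f\in\bigcup_{p>1}L^p(Q)$, and then quote the identity $\bigcup_{p>1}L^p(Q)=\M_{A_1}(Q)$; the Coifman--Rochberg theorem enters only implicitly, through the proof of that identity. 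The paper instead builds an explicit majorant: reverse H\"older gives $s>1$ with $M_Qw\leq (M_Q w^s)^{1/s}$, and since $M_Q(w^s)<\infty$ a.e., Coifman--Rochberg makes $(M_Qw^s)^{1/s}$ an $A_1(Q)$ weight dominating $w\geq |f|$. Your route is shorter once Theorem \ref{thm:localmain} is in hand; the paper's is self-contained and exhibits a concrete $A_1$ majorant. Either argument is acceptable.
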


As an application of the local theory we are able to extend these
results to Hardy spaces.  Note that we look at the
  ``complex analyst's Hardy space'' as opposed to the real analyst's
  Hardy space defined in terms of maximal functions.  Let $\D$ denote
  the unit disk in the plane and $\T$ denote its boundary.  Given $p$,
  $0<p<\infty$, let $H^p=H^p(\D)$ be the space of analytic functions
  ``normed'' by
$$\|f\|_{H^p}=\sup_{0<r<1}\left(\int_0^{2\pi}|f(re^{i\theta})|^p\frac{d\theta}{2\pi}\right)^{1/p}.$$
``Norm'' is in quotes since this is not a norm for $0<p<1$, but we
shall use norm notation $\|\cdot\|$ nonetheless.
The Nevanlinna class, denoted $N$, is the collection of analytic functions on $\D$ such that
$$\|f\|_{N}=\sup_{0<r<1}\int_0^{2\pi}\log^+|f(re^{i\theta})|\,\frac{d\theta}{2\pi}<\infty.$$
Functions in $N$ have nontangential limits almost everywhere on the
boundary so we may treat them as functions on the disk or the circle.
The Smirnov class $N^+$ consists of functions $f\in N$ such that
$$\lim_{r\ra
  1}\int_0^{2\pi}\log^+|f(re^{i\theta})|\,\frac{d\theta}{2\pi}
=\int_0^{2\pi}\log^+|f(e^{i\theta})|\,\frac{d\theta}{2\pi}.$$ 
It is
well known that
$$\bigcup_{p>0} H^p\subsetneqq N^+\subsetneqq N.$$
(see, e.g., the books by Duren \cite{Dur} or Rudin \cite{Rud}.)  The Smirnov class
is often considered a natural limit of $H^p$ as $p\to 0$.

A weight on the torus will be a positive function in $L^1(\T)$.  The
classes $A_1(\T)$, $A_p(\T)$, and $A_\infty(\T)$ are defined analogously
on $\T$.  The weighted Hardy space $H^p_w=H^p_w(\D)$ is the closure of
analytic polynomials in $L^p_w(\T)$.  Since we may identify the torus
$\T$ with $[0,2\pi]$ it is obvious that Theorems \ref{thm:localmainr}
and \ref{thm:localmaininfty} hold for $\T$.   While there
  are real variable definitions of weighted Hardy spaces, this
  classical definition has an intuitive appeal.

In \cite{McC}, while studying the range of Toeplitz operators, the second author showed that
$$N^+=\bigcup_{w\in \W}H^2_w$$
where $\W$ is the Szeg\H{o} class of weights satisfying
$$\int_{\T}\log w\,d\theta>-\infty.$$
We notice that if $w\in A_\infty(\T)$ then we have 
$$\Big(\int_\T w\,d\theta\Big)\exp\Big(-\int_{\T}\log
w\,d\theta\Big)<\infty,$$ in particular $A_\infty(\T)\subset \W$.
Using the above techniques we are able to give a characterization of
$\bigcup_{p>0}H^p$ in terms of weighted $H^p$ spaces.
\begin{theorem} \label{thm:Hp>0} Suppose $p_0$ is an exponent satisfying $0< p_0<\infty$.  Then
$$\bigcup_{p>0} H^p=\bigcup_{w\in A_\infty} H^{p_0}_w.$$
\end{theorem}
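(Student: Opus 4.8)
The plan is to reduce Theorem \ref{thm:Hp>0} to three ingredients: the local theory on the torus (Theorem \ref{thm:localmaininfty}), the classical Smirnov identity $H^q=N^+\cap L^q(\T)$ for $0<q<\infty$, and a weighted analogue of the latter, namely
\begin{equation}\label{eqn:wtdSmirnov} H^{p_0}_w=N^+\cap L^{p_0}_w(\T)\qquad\text{for every } w\in\W,\end{equation}
where throughout we identify an analytic function with its nontangential boundary values. Granting \eqref{eqn:wtdSmirnov}, both inclusions are short. If $f\in H^p$ then $f\in N^+$ and $f\in L^p(\T)$; by Theorem \ref{thm:localmaininfty} applied to $\T$ (valid since $\T$ may be identified with an interval) there is $w\in A_\infty(\T)$ with $f\in L^{p_0}_w(\T)$, and since $A_\infty(\T)\subset\W$, \eqref{eqn:wtdSmirnov} gives $f\in H^{p_0}_w$. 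Conversely, if $f\in H^{p_0}_w$ with $w\in A_\infty(\T)\subset\W$, then \eqref{eqn:wtdSmirnov} yields $f\in N^+$ and $f\in L^{p_0}_w(\T)$; the local theorem again provides $q>0$ with $f\in L^q(\T)$, and the unweighted Smirnov identity gives $f\in H^q$.

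The real content is therefore \eqref{eqn:wtdSmirnov}, which I would establish using the outer function $W$ on $\D$ determined by $|W|=w^{1/p_0}$ on $\T$; this exists precisely because $w\in\W$ means $\log w\in L^1(\T)$. Multiplication by $W$ is an isometry of $L^{p_0}_w(\T)$ onto $L^{p_0}(\T)$, and it carries analytic polynomials into $H^{p_0}$ isometrically, since $\|g\|_{L^{p_0}_w}=\|gW\|_{L^{p_0}(\T)}$. The crucial point is that the closure of $\{gW:g\text{ a polynomial}\}$ in $H^{p_0}$ is all of $H^{p_0}$: this is the classical fact that outer functions are cyclic for the shift on $H^{p_0}$, valid for all $0<p_0<\infty$. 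Hence $g\mapsto gW$ extends to an onto isometry $H^{p_0}_w\to H^{p_0}$. Unwinding the identification, $f\in H^{p_0}_w$ iff $fW\in H^{p_0}=N^+\cap L^{p_0}(\T)$; and because $W$ is outer, $fW\in N^+$ iff $f\in N^+$, while $fW\in L^{p_0}(\T)$ iff $f\in L^{p_0}_w(\T)$. This is exactly \eqref{eqn:wtdSmirnov}. To make the identification of $H^{p_0}_w$ with a space of analytic functions legitimate, I would first note that point evaluations are bounded on $H^{p_0}_w$: writing $h=gW\in H^{p_0}$, the standard estimate $|h(z)|\le C_\rho\|h\|_{H^{p_0}}$ for $|z|\le\rho<1$, together with $|g(z)|=|h(z)|/|W(z)|$ and the fact that the outer function $W$ is bounded below on compact subsets of $\D$, shows that $L^{p_0}_w$-Cauchy sequences of polynomials converge locally uniformly to an analytic function.

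I expect the main obstacle to be the careful proof of \eqref{eqn:wtdSmirnov}, specifically the cyclicity of $W$ and the boundedness of point evaluations in the quasi-Banach range $0<p_0<1$, where $H^{p_0}$ is not locally convex; both facts are classical (see Duren \cite{Dur}) but must be invoked with care there. A secondary point to verify is that the identifications between analytic functions and their boundary values are consistent across the three space definitions involved ($H^p$, $N^+$, and $H^{p_0}_w$), so that the set equalities asserted in the theorem are literally equalities of subspaces of $N$.
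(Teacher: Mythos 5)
Your proposal is correct and follows essentially the same route as the paper: both reduce the statement to Theorem \ref{thm:localmaininfty} on $\T$, factor $w=|W|^{p_0}$ with $W$ outer, and invoke the density of $W\C[z]$ in $H^{p_0}$ (Beurling for $p_0\geq 1$, Gamelin for $0<p_0<1$) together with the identity $H^q=N^+\cap L^q(\T)$. The only difference is organizational: you package the multiplication-by-$W$ argument as an explicit weighted Smirnov identity $H^{p_0}_w=N^+\cap L^{p_0}_w(\T)$, whereas the paper runs the same approximation argument directly with polynomial sequences $Q_n$.
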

If we define  $H_{A_1}(\T)$ as functions in $N^+$ whose boundary
function is majorized by an $A_1(\T)$ weight, then we have the
following analog of Theorem \ref{thm:localmain} for Hardy spaces.
\begin{theorem} \label{thm:Hp>1} If $p_0$ is an exponent satisfying $1<p_0<\infty$, then
$$H_{A_1}(\T)=\bigcup_{p>1} H^p=\bigcup_{w\in A_{p_0}} H^{p_0}_w.$$
\end{theorem}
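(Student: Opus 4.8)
The plan is to reduce each of the three classes to a condition on the boundary function and then invoke the torus version of Theorem~\ref{thm:localmain}. Each class will be identified, via almost-everywhere boundary values $f^*$, with a subset of the Smirnov class $N^+$. By definition, $H_{A_1}(\T)=\{f\in N^+:f^*\in\M_{A_1}(\T)\}$. The classical theorem of Smirnov (see \cite{Dur}) states that if $f\in N^+$ and $f^*\in L^p(\T)$ for some $p>0$, then $f\in H^p$; conversely $H^p\subset N^+$ and $f^*\in L^p(\T)$. Hence $\bigcup_{p>1}H^p=\{f\in N^+:f^*\in\bigcup_{p>1}L^p(\T)\}$. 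Since $\T$ may be identified with $[0,2\pi]$, Theorem~\ref{thm:localmain} holds on $\T$, so $\M_{A_1}(\T)=\bigcup_{p>1}L^p(\T)=\bigcup_{w\in A_{p_0}}L^{p_0}_w(\T)$. Combining these facts immediately gives $H_{A_1}(\T)=\bigcup_{p>1}H^p$, and reduces the remaining equality to understanding the boundary values of $H^{p_0}_w$.

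The crux is therefore the following identification: for $w\in A_{p_0}(\T)$,
\[
H^{p_0}_w=\{f\in N^+:f^*\in L^{p_0}_w(\T)\}.
\]
Granting this, the torus form of Theorem~\ref{thm:localmain} together with Smirnov's theorem closes the last equality exactly as before. If $f\in H^{p_0}_w$, then $f\in N^+$ and $f^*\in L^{p_0}_w(\T)\subset\bigcup_{p>1}L^p(\T)$, so $f\in\bigcup_{p>1}H^p$. Conversely, if $f\in H^p$ with $p>1$, then $f^*\in L^p(\T)\subset\bigcup_{w\in A_{p_0}}L^{p_0}_w(\T)$, so $f^*\in L^{p_0}_w(\T)$ for some $w\in A_{p_0}$, and the identification places $f$ in $H^{p_0}_w$.

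To prove the identification I would use two standard features of $A_{p_0}$ weights. Since $w\in A_{p_0}$ forces $w^{-1/(p_0-1)}\in L^1(\T)$, Hölder's inequality gives $L^{p_0}_w(\T)\subset L^1(\T)$ with $\|g\|_{L^1}\le C\|g\|_{L^{p_0}_w}$. For the inclusion $\subseteq$, take analytic polynomials $q_n\to g$ in $L^{p_0}_w$; then $q_n\to g$ in $L^1$ as well, so the negative Fourier coefficients of $g$ vanish, whence $g\in H^1\subset N^+$ and $g^*=g\in L^{p_0}_w$. For the reverse inclusion, let $f\in N^+$ with $f^*\in L^{p_0}_w\subset L^1$; Smirnov's theorem gives $f\in H^1$, so the dilations $f_\rho(z)=f(\rho z)$ satisfy $f_\rho\to f^*$ almost everywhere with $|f_\rho|\le C\,Mf^*$ for all $\rho<1$. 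Because $M$ is bounded on $L^{p_0}_w$ --- the defining property of $A_{p_0}$ --- the function $Mf^*\in L^{p_0}_w$ dominates the family, and dominated convergence yields $f_\rho\to f^*$ in $L^{p_0}_w$. As each $f_\rho$ is a uniform limit on $\T$ of its Taylor polynomials, we obtain $f\in H^{p_0}_w$. The main obstacle is precisely this reverse inclusion, namely the density of analytic polynomials in the boundary description of $H^{p_0}_w$, where the $A_{p_0}$ maximal inequality is indispensable; the rest of the argument is bookkeeping around Smirnov's theorem and the torus form of Theorem~\ref{thm:localmain}.
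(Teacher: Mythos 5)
Your proposal is correct, and the first equality is handled exactly as in the paper (Smirnov's theorem $N^+\cap L^p(\T)=H^p$ combined with the torus version of Theorem \ref{thm:localmain}). For the second equality, however, you take a genuinely different route. The paper factors the weight as $w=|h|^{p_0}$ with $h\in H^{p_0}$ outer (possible since $A_{p_0}(\T)\subset\W$) and then invokes the density of $h\C[z]$ in $H^{p_0}$ --- a generalized Beurling theorem --- to pass polynomials back and forth between $L^{p_0}_w$ and unweighted $H^{p_0}$. You instead establish the identification $H^{p_0}_w=N^+\cap L^{p_0}_w(\T)$ directly: the inclusion $\subseteq$ via $L^{p_0}_w(\T)\subset L^1(\T)$ (using $w^{1-p_0'}\in L^1(\T)$) and continuity of Fourier coefficients, and the inclusion $\supseteq$ via the Poisson-maximal bound $\sup_{\rho<1}|f_\rho|\leq C\,Mf^*$ together with Muckenhoupt's theorem that $M\in\B(L^{p_0}_w)$ for $w\in A_{p_0}$, followed by dominated convergence. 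Both steps are sound, and your version has the advantage of isolating the clean structural fact $H^{p_0}_w=N^+\cap L^{p_0}_w(\T)$ while avoiding Beurling-type density theorems entirely. The trade-off is scope: your argument is intrinsically limited to $p_0>1$, since the maximal inequality fails on $L^{p_0}_w$ for $p_0\leq 1$ and for general $A_\infty$ weights, whereas the paper's outer-factorization argument is uniform in $0<p_0<\infty$ and is reused verbatim to prove Theorem \ref{thm:Hp>0} (with Gamelin's theorem supplying the density statement when $0<p_0<1$). For the theorem as stated, your approach is fully adequate.
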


For functions on $\R^n$, the theory is not as nice.  One advantage of
the local case is that the $L^p$ spaces are nested.  Because the $L^p(\R^n)$ are not nested,
we are not able to obtain equality of the union of $L^p(\R^n)$, $p>1$
and $\M_{A_1}(\R^n)$.  Remarkably, even the much larger union over
weak-$L^p(\R^n)$ spaces is not equal to $\M_{A_1}(\R^n)$.

\begin{theorem} \label{thm:mainglobal} 
If $p_0$ is any exponent with $1<p_0<\infty$ then
$$\bigcup_{p>1}L^{p,\infty}(\R^n)\subsetneqq \bigcup_{w\in A_{p_0}}
L^{p_0}_w(\R^n)\subsetneqq \M_{A_1}(\R^n).$$
\end{theorem}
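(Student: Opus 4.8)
The plan is to prove the two inclusions separately and then to exhibit explicit functions witnessing that each is strict. Throughout I may assume $f\ge 0$ and $f\not\equiv 0$.

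For the first inclusion, suppose $f\in L^{p,\infty}(\R^n)$ for some $p>1$. Since $M$ is bounded on $L^{p,\infty}(\R^n)$ for $p>1$, we have $Mf\in L^{p,\infty}$, so $0<Mf<\infty$ a.e. Fix a strictly positive bounded $\phi\in L^1(\R^n)$ (e.g. $\phi(x)=(1+|x|)^{-n-1}$), so that $M\phi$ is bounded with $M\phi(x)\approx(1+|x|)^{-n}$ at infinity. The idea is to build a weight that simultaneously dips where $|f|$ is large and decays in space, using the factorization $w=u_1u_2^{1-p_0}$ with $u_1,u_2\in A_1$; the direction needed here (that $A_1\cdot A_1^{1-p_0}\subset A_{p_0}$) is elementary. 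I would take $u_1=(M\phi)^{\delta}$ and $u_2=(Mf)^{\delta'}$, both in $A_1$ by the Coifman--Rochberg theorem (Theorem \ref{thm:coifroch}) for $0\le\delta,\delta'<1$, so that with $s=\delta'(p_0-1)\in(0,p_0-1)$,
\[
w=(M\phi)^{\delta}(Mf)^{-s}\in A_{p_0}.
\]
Using $|f|\le Mf$ gives $|f|^{p_0}w\le (Mf)^{p_0-s}(M\phi)^{\delta}$, and I would choose $s\in(\max(0,p_0-p),p_0-1)$ --- nonempty precisely because $p>1$ --- so that $r:=p/(p_0-s)>1$ and $(Mf)^{p_0-s}\in L^{r,\infty}$. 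Choosing $\delta\in(1/r',1)$ makes $(M\phi)^{\delta}\in L^{r',1}$ (its decreasing rearrangement is $\approx t^{-\delta}$), and the Lorentz--H\"older inequality gives $\int|f|^{p_0}w\le C\|(Mf)^{p_0-s}\|_{L^{r,\infty}}\|(M\phi)^{\delta}\|_{L^{r',1}}<\infty$, so $f\in L^{p_0}_w$. The essential point is that a negative power of $Mf$ by itself grows at infinity, so the spatially decaying factor $(M\phi)^{\delta}$ is indispensable.

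For the second inclusion I would reduce the existence of an $A_1$ majorant to a finiteness statement for a maximal function: if $r>1$ and $\int_{\R^n}\frac{|f(y)|^r}{1+|y|^n}\,dy<\infty$, then $M(|f|^r)<\infty$ a.e. (this single tail bound also forces $|f|^r\in L^1_{\loc}$), and $u:=(M(|f|^r))^{1/r}$ is an $A_1$ weight by Coifman--Rochberg (the exponent $1/r$ lies in $(0,1)$) with $u\ge|f|$, so $f\in\M_{A_1}(\R^n)$. It thus suffices to produce one $r>1$ making the tail integral finite. With $\sigma=w^{-1/(p_0-1)}\in A_{p_0'}$, H\"older with exponents $p_0/r$ and $(p_0/r)'$ gives
\[
\int_{\R^n}\frac{|f|^r}{1+|y|^n}\,dy\le\Big(\int|f|^{p_0}w\Big)^{r/p_0}\Big(\int \sigma^{\theta}(1+|y|^n)^{-N}\,dy\Big)^{(p_0-r)/p_0},
\]
where $\theta=\frac{r(p_0-1)}{p_0-r}$ and $N=\frac{p_0}{p_0-r}$. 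The main obstacle is the convergence at infinity of this last integral, and I expect it to be resolved by the openness of the $A_p$ classes: since $\sigma\in A_{p_0'}$ it in fact lies in $A_{q_1}$ for some $q_1<p_0'$, whence $\sigma(B(0,R))\lesssim R^{nq_1}$. A dyadic summation combined with the reverse H\"older inequality for $\sigma$ then yields finiteness exactly under the condition $q_1<p_0'$, provided $r>1$ is taken close enough to $1$ that $\theta$ stays in the reverse H\"older range of $\sigma$. This gives $\bigcup_{w\in A_{p_0}}L^{p_0}_w(\R^n)\subseteq\M_{A_1}(\R^n)$.

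It remains to see that both inclusions are strict. For the second, the constant function $f\equiv 1$ lies in $\M_{A_1}(\R^n)$ (it is its own $A_1$ majorant) but cannot belong to any $L^{p_0}_w(\R^n)$, since $\int_{\R^n}w=\infty$ for every $w\in A_{p_0}$ (indeed every $A_\infty$ weight has infinite total mass). For the first, I would take disjoint unit balls $B_k=B(x_k,1)$ with $|x_k|=e^k$ and set $f=\sum_k(\log(k+2))^{-1}\chi_{B_k}$. Its distribution function satisfies $|\{|f|>\lambda\}|\approx e^{1/\lambda}$ as $\lambda\to0^+$, which grows faster than any power of $1/\lambda$, so $f\notin L^{p,\infty}(\R^n)$ for every $p>1$; yet taking $w(x)=(1+|x|)^{-a}$ with $0<a<n$ (so $w\in A_{p_0}$) gives $\int|f|^{p_0}w\approx\sum_k(\log(k+2))^{-p_0}e^{-ak}<\infty$, so $f\in L^{p_0}_w(\R^n)$. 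These two examples establish the strict inclusions.
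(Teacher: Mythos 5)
Your proposal is correct in outline, but it reaches the theorem by a genuinely different route than the paper. The paper does not prove the two inclusions directly: it first establishes the identity $\bigcup_{w\in A_{p_0}}L^{p_0}_w(\R^n)=\M_{A_1}(\R^n)\cap\bigcup_{w\in A_1}L^1_w(\R^n)$ (Theorem \ref{thm:A1L1union}) and the Banach function space characterization (Theorem \ref{thm:BFS}), both via the Rubio de Francia iteration $Rf=\sum_k M^kf/(2^k\|M\|^k)$ applied in $L^{p_0}_w$ and in the associate space $L^{p_0'}_\sigma$; the first inclusion then falls out of Corollary \ref{cor:weak} because $M$ is bounded on $L^{p,\infty}$ and on its associate $L^{p',1}$, and the $A_1$ majorant of an $f\in L^{p_0}_w$ is simply $Rf$ itself. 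Your substitutes are more hands-on: for the first inclusion an explicit weight $(M\phi)^{\delta}(Mf)^{-s}$ built from Coifman--Rochberg factors plus Lorentz--H\"older duality (your observation that the decaying factor $(M\phi)^{\delta}$ is indispensable is exactly right), and for the second a tail-integral criterion for $M(|f|^r)<\infty$ followed by Coifman--Rochberg, which is in substance the paper's Theorem \ref{thm:globalchar} rederived. What the paper's route buys is the stronger structural statement (the intersection identity and the $\B(\X)\cap\B(\X')$ theorem) at no extra cost, and it avoids the one genuinely delicate point in your argument: the convergence of $\int\sigma^{\theta}(1+|y|^n)^{-N}\,dy$, which you only sketch. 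That step does close --- by reverse H\"older one gets $\sigma^{\theta}\in A_{p_0'}$ for $\theta$ close enough to $1$, the doubling estimate gives $\sigma^{\theta}(B(0,R))\lesssim R^{np_0'}$, and $N=p_0/(p_0-r)>p_0'$ for $r>1$ makes the dyadic sum converge, so you do not even need the openness in the index $q$ that you invoke --- but you should write it out. Your strictness examples are both valid; the paper uses the same constant function for the second inclusion and the simpler single function $\max(|x|^{-\alpha n},|x|^{-\beta n})$, $0<\alpha<\beta<1$, for the first, verified via the min/max lemma for $A_1$ weights rather than a sum of logarithmically weighted bumps.
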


The proof uses the extrapolation theory of Rubio de Francia
\cite{Rub1,Rub2} (see also the book \cite{CMP}).  In addition, the
global version of \ref{thm:localApmajor} does not hold.
\begin{theorem} \label{thm:globalApmajor} If $p>1$ then
$$\M_{A_1}(\R^n)\subsetneqq \M_{A_p}(\R^n).$$
\end{theorem}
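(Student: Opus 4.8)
The containment $\M_{A_1}(\R^n)\subseteq\M_{A_p}(\R^n)$ is already recorded above, since $A_1\subset A_p$ forces every $A_1$ majorant to be an $A_p$ majorant; the whole content of the theorem is therefore the strictness, and the plan is to exhibit a single function lying in $\M_{A_p}(\R^n)\setminus\M_{A_1}(\R^n)$. The natural candidate is a power weight: I would take $f(x)=|x|^{\alpha}$ with $0<\alpha<n(p-1)$, a range that is nonempty precisely because $p>1$. It is classical that $|x|^{\alpha}\in A_p(\R^n)$ exactly when $-n<\alpha<n(p-1)$, so for this choice $f$ is itself an $A_p$ weight; as $|f|=f\le f$, this already gives $f\in\M_{A_p}(\R^n)$.

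The crux is to show $f\notin\M_{A_1}(\R^n)$, and for this I would first isolate an elementary necessary condition for membership in $\M_{A_1}(\R^n)$: if $g$ has an $A_1$ majorant $w$, then by monotonicity of $M$ together with $|g|\le w$ and the defining inequality $Mw\le Cw$ of the $A_1$ class,
$$Mg\le Mw\le Cw<\infty\qquad\text{a.e.},$$
the last inequality because $w$ is locally integrable and hence finite a.e. Thus no function whose maximal function is infinite on a set of positive measure can possess an $A_1$ majorant.

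It then remains to verify that $Mf\equiv+\infty$ when $f(x)=|x|^{\alpha}$ with $\alpha>0$. This is a one-line computation: the average of $|y|^{\alpha}$ over the ball $B(0,R)$ is comparable to $R^{\alpha}$, which tends to $+\infty$ as $R\to\infty$, and since any fixed $x$ eventually lies in $B(0,R)$, the supremum defining $Mf(x)$ is $+\infty$ at every point. Combined with the necessary condition above, this yields $f\notin\M_{A_1}(\R^n)$ and completes the proof of strictness.

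I do not expect a genuine obstacle in this argument; it reduces to choosing the right explicit example and recording the growth obstruction to admitting an $A_1$ majorant. The only points needing mild care are pinning down the exact range of $\alpha$ for which $|x|^{\alpha}\in A_p$, so that $f$ truly has an $A_p$ majorant, and justifying $Mf\equiv\infty$ for $\alpha>0$; both are standard facts about power weights and the Hardy--Littlewood maximal function.
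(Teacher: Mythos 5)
Your proposal is correct and is essentially identical to the paper's own proof: both take $f(x)=|x|^{\alpha}$ with $0<\alpha<n(p-1)$, note that $f\in A_p(\R^n)\subset \M_{A_p}(\R^n)$, and rule out an $A_1$ majorant by showing $Mf\equiv\infty$ (so $f\notin\M_F(\R^n)$, whereas any function with an $A_1$ majorant $w$ satisfies $Mf\leq Mw\leq Cw<\infty$ a.e.). No differences worth noting.
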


There are some positive results on $\R^n$.  Define
$\M_{F}(\R^n)$ to be the class of functions such that $Mf<\infty$
a.e. on $\R^n$ and $\M_{A_\infty^{F}}$ to be functions, $f$, such that
there exists $w\in A_\infty^F=A_\infty\cap \M_F$ with $|f|\leq w$.
Given $w\in A_\infty$, a simple way to create a weight in $A_\infty^F$
is to take a truncation: let $w_\lambda=\max(w,\lambda)$ for $\lambda
>0$.  Then $w_\lambda \in A_\infty\cap L^\infty\subset A_\infty^F$.
Moreover, we have the following characterizations of $\M_{A_1}(\R^n)$.

\begin{theorem} \label{thm:globalAinfty}There holds $\M_{A_1}(\R^n)=\M_{A^F_\infty}(\R^n)$.
\end{theorem}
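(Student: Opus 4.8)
The plan is to prove the two inclusions separately, with $\M_{A_1}(\R^n)\subseteq \M_{A^F_\infty}(\R^n)$ essentially immediate and the reverse inclusion carrying all the content. For the easy direction, suppose $f\in \M_{A_1}(\R^n)$, so $|f|\le w$ for some $w\in A_1$. Since $A_1\subset A_\infty$, and since $w\in A_1$ forces $Mw\le Cw<\infty$ a.e. (as $w$ is a weight, hence finite a.e.), we get $w\in A_\infty\cap\M_F=A^F_\infty$. Thus $w$ is already an $A^F_\infty$ majorant of $f$, giving $f\in\M_{A^F_\infty}(\R^n)$.

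For the substantive inclusion $\M_{A^F_\infty}(\R^n)\subseteq \M_{A_1}(\R^n)$, I would start from a majorant $w\in A^F_\infty$ with $|f|\le w$ and manufacture an honest $A_1$ majorant out of $w$ using the Coifman--Rochberg theorem (Theorem \ref{thm:coifroch}), which produces $A_1$ weights of the form $(Mg)^\delta$ for $0<\delta<1$ whenever $Mg<\infty$ a.e. The naive attempt of feeding $g=w$ into Coifman--Rochberg fails: $(Mw)^\delta\in A_1$, but since $\delta<1$ this need not dominate $w$, hence need not majorize $f$. The key idea is to feed in a higher power of $w$. Because $w\in A_\infty$ it satisfies a reverse H\"older inequality: there exist $s>1$ and $C$ with $\big(\frac{1}{|Q|}\int_Q w^s\big)^{1/s}\le \frac{C}{|Q|}\int_Q w$ for every cube $Q$; in particular $w^s\in L^1_{\loc}$.

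The reverse H\"older inequality translates at once into a pointwise comparison of maximal functions. For a cube $Q\ni x$ one has $\frac{1}{|Q|}\int_Q w^s\le C^s\big(\frac{1}{|Q|}\int_Q w\big)^s\le C^s(Mw(x))^s$, and taking the supremum over $Q\ni x$ yields $\big(M(w^s)\big)^{1/s}\le C\,Mw$ pointwise. Since $w\in\M_F$ gives $Mw<\infty$ a.e., this forces $M(w^s)<\infty$ a.e. as well. Now apply Coifman--Rochberg to $g=w^s$ with $\delta=1/s\in(0,1)$: the weight $v:=\big(M(w^s)\big)^{1/s}=(Mg)^\delta$ lies in $A_1$. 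Finally the Lebesgue differentiation theorem gives $w^s\le M(w^s)$ a.e., hence $w\le v$ a.e., so $|f|\le w\le v$ with $v\in A_1$; that is, $f\in\M_{A_1}(\R^n)$.

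I expect the only real obstacle to be identifying the correct quantity to insert into Coifman--Rochberg. Both hypotheses on the majorant are used in exactly one place each and neither can be dropped: the $A_\infty$ condition supplies the reverse H\"older exponent $s$ that makes $\big(M(w^s)\big)^{1/s}$ large enough to dominate $w$, while the $\M_F$ condition supplies the a.e. finiteness of $Mw$, and hence of $M(w^s)$, needed to invoke the theorem. Once this pairing is seen, the remaining steps are the routine pointwise maximal-function estimate and a single application of Lebesgue differentiation.
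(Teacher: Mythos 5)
Your proposal is correct and follows essentially the same route as the paper: both use the reverse H\"older inequality to get $\big(M(w^s)\big)^{1/s}\leq C\,Mw<\infty$ a.e., then apply the Coifman--Rochberg theorem to $w^s$ with exponent $1/s$ to produce the $A_1$ majorant. The only cosmetic difference is that the paper passes through $|f|\leq M(|f|^s)^{1/s}\leq M(w^s)^{1/s}$ while you apply Lebesgue differentiation directly to $w^s$; these are the same observation.
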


Remarkably, $\M_{A_1}(\R^n)$ can be described in terms of functions whose
maximal operator is only finite almost everywhere.

\begin{theorem}\label{thm:globalchar} 
A function $f$, belongs to $\M_{A_1}(\R^n)$ if and only if there is an
$s>1$ such that $|f|^s\in \M_F(\R^n)$.
\end{theorem}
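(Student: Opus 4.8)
The plan is to prove both implications by invoking the two tools already highlighted in the introduction: the reverse H\"older inequality for $A_1$ weights and the Coifman--Rochberg construction (Theorem \ref{thm:coifroch}), which produces an $A_1$ weight of the form $(Mg)^\delta$ whenever $Mg<\infty$ a.e.\ and $0<\delta<1$. The key numerical observation is that the Coifman--Rochberg exponent $\delta$ can be tuned to exactly $1/s$, which is what allows one to recover a majorant for $|f|$ itself rather than for a power of it.

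For the sufficiency direction, suppose $|f|^s\in\M_F(\R^n)$ for some $s>1$, so that $g:=|f|^s$ satisfies $Mg<\infty$ a.e. Since $1/s\in(0,1)$, I would set $w:=(Mg)^{1/s}=\big(M(|f|^s)\big)^{1/s}$ and apply Coifman--Rochberg to conclude $w\in A_1$. It remains to check that $w$ majorizes $|f|$: by the Lebesgue differentiation theorem $Mg\geq g=|f|^s$ a.e., hence $w=(Mg)^{1/s}\geq |f|$ a.e. Thus $w$ is an $A_1$ majorant of $f$ and $f\in\M_{A_1}(\R^n)$.

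For the necessity direction, suppose $f\in\M_{A_1}(\R^n)$, so $|f|\leq w$ for some $w\in A_1$, say $Mw\leq C_1 w$ a.e. I would self-improve $w$ using the reverse H\"older inequality: there exist $\epsilon>0$ and $C>0$ so that $\big(\frac{1}{|Q|}\int_Q w^{1+\epsilon}\big)^{1/(1+\epsilon)}\leq \frac{C}{|Q|}\int_Q w$ for every cube $Q$. Raising to the power $1+\epsilon$ and taking the supremum over cubes $Q\ni x$ gives $M(w^{1+\epsilon})(x)\leq C^{1+\epsilon}(Mw(x))^{1+\epsilon}\leq C_2\,w(x)^{1+\epsilon}<\infty$ for a.e.\ $x$; in other words $w^{1+\epsilon}\in A_1$, and in particular $M(w^{1+\epsilon})<\infty$ a.e. Taking $s:=1+\epsilon>1$ and using $|f|\leq w$ yields $M(|f|^s)\leq M(w^s)<\infty$ a.e., so $|f|^s\in\M_F(\R^n)$.

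The two directions are each short once the tools are in place, so there is no single hard computational step; the main conceptual point—and the place where care is needed—is the exponent bookkeeping. In the sufficiency direction one must match $\delta=1/s$ so that $(Mg)^\delta$ dominates $|f|$ and not merely $|f|^t$ for some $t<1$, while in the necessity direction one must know that the self-improvement $w\in A_1\Rightarrow w^{1+\epsilon}\in A_1$ is genuinely a statement about the \emph{global} $A_1$ class on $\R^n$, so that the single exponent $\epsilon$ works uniformly across all cubes. Verifying that this global reverse H\"older inequality holds with a uniform exponent for global $A_1$ weights is the only point that requires invoking the standard theory rather than a one-line manipulation.
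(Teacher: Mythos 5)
Your proposal is correct and follows essentially the same route as the paper: the sufficiency direction is identical (Coifman--Rochberg with $\delta=1/s$ applied to $M(|f|^s)$, plus Lebesgue differentiation), and the necessity direction rests on the same self-improvement $w\in A_1\Rightarrow w^s\in A_1$ for some $s>1$, which the paper cites as Theorem \ref{thm:open}(ii) and which you re-derive from the reverse H\"older inequality before concluding $M(|f|^s)\leq M(w^s)\leq Cw^s<\infty$ a.e. The only cosmetic difference is that the paper routes the last step through Theorem \ref{thm:r>1} and the inclusion $\M_{A_1}(\R^n)\subset L^1_{\mathrm{loc}}(\R^n)$, while you argue the pointwise finiteness directly.
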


We have two descriptions of the union $L^p_w(\R^n)$ where
$p$ is fixed and $w$ ranges over all $A_p$ weights.

\begin{theorem} \label{thm:A1L1union}Suppose $1<p<\infty$, then 
$$\bigcup_{w\in A_p} L^p_w(\R^n)=\M_{A_1}(\R^n)\cap\Big(\bigcup_{w\in A_1} L^1_w(\R^n)\Big).$$
\end{theorem}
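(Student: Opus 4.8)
The plan is to prove the two inclusions separately, and I would dispatch the inclusion $\supseteq$ first, as it is short and clean. Suppose $f\in\M_{A_1}(\R^n)$, so $|f|\le u$ for some $u\in A_1$, and also $f\in L^1_v(\R^n)$ for some $v\in A_1$. Set $w=v\,u^{1-p}$. By the easy direction of the Jones factorization theorem, a product $w_0w_1^{1-p}$ of two $A_1$ weights always lies in $A_p$, so $w\in A_p$. Since $1-p<0$ and $|f|\le u$, we have $|f|^{p-1}\le u^{p-1}$ and hence $|f|^p u^{1-p}\le |f|$ pointwise; therefore
\[
\int_{\R^n}|f|^p w\,dx=\int_{\R^n}\bigl(|f|^p u^{1-p}\bigr)v\,dx\le\int_{\R^n}|f|\,v\,dx<\infty,
\]
so $f\in L^p_w(\R^n)$ with $w\in A_p$, as desired.

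For the inclusion $\subseteq$, suppose $f\in L^p_w(\R^n)$ for some $w\in A_p$. That $f\in\M_{A_1}(\R^n)$ is already contained in Theorem \ref{thm:mainglobal}, so nothing new is needed there. The real content is to exhibit an $A_1$ weight $v$ with $\int|f|v<\infty$. Writing $\sigma=w^{1-p'}=w^{-1/(p-1)}$, which lies in $A_{p'}$ by the standard $A_p$ duality, Hölder's inequality gives
\[
\int_{\R^n}|f|\,v\,dx=\int_{\R^n}\bigl(|f|\,w^{1/p}\bigr)\bigl(v\,w^{-1/p}\bigr)\,dx\le\Big(\int_{\R^n}|f|^p w\,dx\Big)^{1/p}\Big(\int_{\R^n} v^{p'}\sigma\,dx\Big)^{1/p'},
\]
so it suffices to find $v\in A_1$ with $\int_{\R^n}v^{p'}\sigma\,dx<\infty$. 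Note this is now a statement purely about the weight $\sigma\in A_{p'}$, and no longer involves $f$.

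To construct such a $v$ I would take the explicit weight $v(x)=(1+|x|)^{-\beta}$, which is an $A_1$ weight precisely for $0\le\beta<n$, and choose $\beta$ close enough to $n$. Near the origin $\int_{|x|\le1}v^{p'}\sigma\,dx\le\sigma(B(0,1))<\infty$ by local integrability, so only the behaviour at infinity matters. Decomposing dyadically, $\int_{|x|>1}v^{p'}\sigma\,dx\lesssim\sum_{k\ge0}2^{-k\beta p'}\sigma\bigl(B(0,2^{k+1})\bigr)$, so I need the $\sigma$-measure of balls to grow slower than $2^{k\beta p'}$ for some $\beta<n$. Here is the one genuinely nontrivial input: by the openness (self-improvement) of the $A_{p'}$ condition, $\sigma\in A_{p'-\epsilon}$ for some $\epsilon>0$, which yields the growth bound $\sigma(B(0,R))\lesssim R^{n(p'-\epsilon)}$ for $R\ge1$. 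Choosing any $\beta$ with $n(1-\epsilon/p')<\beta<n$ then makes the dyadic series converge, so $v^{p'}\sigma\in L^1(\R^n)$. This is the step I expect to be the main obstacle, since the crude bound $\sigma(B(0,R))\lesssim R^{np'}$ coming directly from $A_{p'}$ has exponent exactly $np'$ and leaves no room; the strict improvement to exponent $n(p'-\epsilon)$ is exactly what buys the gap below $np'$ that an $A_1$ weight requires.

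Combining the pieces, $f\in L^1_v(\R^n)$ with $v\in A_1$, which completes the inclusion $\subseteq$ and hence the theorem. I would note in passing that the crux of the whole argument is the weight-theoretic fact extracted above: every $\sigma\in A_{p'}$ admits an $A_1$ weight $v$ with $v^{p'}\sigma\in L^1(\R^n)$.
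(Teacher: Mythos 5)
Your $\supseteq$ direction is exactly the paper's argument (the product $u\,v^{1-p}$ of $A_1$ weights is in $A_p$ by Theorem \ref{thm:properties}(iv), and the pointwise bound $|f|^pu^{1-p}\le|f|$ does the rest), so nothing to add there. For the $\subseteq$ direction your construction of the $A_1$ weight $v$ is correct but genuinely different from the paper's. The paper runs the Rubio de Francia algorithm twice: once on $f$ itself in $L^p_w$ to produce the $A_1$ majorant $Rf$, and once on an arbitrary nonzero $g\in L^{p'}_\sigma$ to produce $Rg\in A_1(\R^n)\cap L^{p'}_\sigma(\R^n)$, after which the same H\"older pairing you use finishes the job. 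You instead exhibit an explicit element of $A_1\cap L^{p'}_\sigma$, namely $v(x)=(1+|x|)^{-\beta}$ with $\beta$ just below $n$, verified via the growth estimate $\sigma(B(0,R))\lesssim R^{nq}$ for $\sigma\in A_q$ combined with the openness $\sigma\in A_{p'-\epsilon}$. Your accounting is right: the crude exponent $np'$ leaves no room, and the self-improvement is exactly what creates the gap; the dyadic sum then converges for $n(1-\epsilon/p')<\beta<n$. The trade-off is that your route needs two standard weight facts not isolated in the paper (the $A_q$ measure-growth inequality and Theorem \ref{thm:open}(i)), while the paper's iteration algorithm needs only the boundedness of $M$ on $L^{p'}_\sigma$ and produces a majorant/test weight adapted to any Banach function space, which is why the same two lines reappear verbatim in the proof of Theorem \ref{thm:BFS}. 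Your explicit weight, on the other hand, shows the slightly stronger statement that a \emph{single} universal family of power weights works for every $\sigma\in A_{p'}$.

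One genuine (though easily repaired) gap: you dismiss the claim $f\in L^p_w(\R^n)\Rightarrow f\in\M_{A_1}(\R^n)$ by citing Theorem \ref{thm:mainglobal}, but in the paper that containment of Theorem \ref{thm:mainglobal} is itself \emph{deduced from} Theorem \ref{thm:A1L1union}, so as written your argument is circular. You need to prove this step directly; the natural fix is the one the paper uses, namely $Rf=\sum_{k\ge0}M^kf/(2^k\|M\|^k_{\B(L^p_w)})$, which converges in $L^p_w$, satisfies $MRf\le 2\|M\|_{\B(L^p_w)}Rf$, and majorizes $|f|$. Note that the cheaper route via Coifman--Rochberg does not work here, since $(Mf)^\delta\in A_1$ only for $\delta<1$ and that majorizes $|f|^\delta$, not $|f|$.
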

The class $\M_{A_1}(\R^n)$ can be thought of as a generalization of
$L^\infty(\R^n)$---i.e.\ functions here are majorized by constants,
which are $A_1$ weights---while $\bigcup_{w\in A_1} L^1_w(\R^n)$ is
generalization of $L^1(\R^n)$.  Considering the basic fact
$$L^1(\R^n)\cap L^\infty(\R^n)\subset \bigcap_{1< p<\infty}L^p(\R^n)$$
Theorem \ref{thm:A1L1union} (combined with Theorem
\ref{thm:mainglobal}) shows that if we enlarge $L^\infty(\R^n)$ to
$\M_{A_1}(\R^n)$ and $L^1(\R^n)$ to $\bigcup_{w\in A_1}L^1_w(\R^n)$
and intersect the two, then we pick up an even bigger class of functions---one that properly contains the union of all $L^p(\R^n)$, for $p>1$.
We also note the following corollary to Theorem \ref{thm:A1L1union}.

\begin{corollary} If $1<p,q<\infty$ then 
$$\bigcup_{w\in A_p}L^p_w(\R^n)=\bigcup_{w\in A_q}L^q_w(\R^n).$$
\end{corollary}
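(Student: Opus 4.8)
The plan is to observe that the right-hand side of the characterization in Theorem \ref{thm:A1L1union} makes no reference whatsoever to the exponent. Indeed, for any $p$ with $1<p<\infty$, Theorem \ref{thm:A1L1union} asserts
$$\bigcup_{w\in A_p} L^p_w(\R^n)=\M_{A_1}(\R^n)\cap\Big(\bigcup_{w\in A_1} L^1_w(\R^n)\Big),$$
and the set on the right is a single fixed subset of measurable functions, completely independent of $p$. The entire argument rests on recognizing this exponent-freeness.

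Accordingly, I would simply apply Theorem \ref{thm:A1L1union} twice: once with the exponent $p$ and once with the exponent $q$. Each application expresses the corresponding union as the \emph{same} set $\M_{A_1}(\R^n)\cap\big(\bigcup_{w\in A_1} L^1_w(\R^n)\big)$. By transitivity of equality, the two left-hand sides must then agree, yielding
$$\bigcup_{w\in A_p}L^p_w(\R^n)=\M_{A_1}(\R^n)\cap\Big(\bigcup_{w\in A_1} L^1_w(\R^n)\Big)=\bigcup_{w\in A_q}L^q_w(\R^n),$$
which is precisely the claimed identity.

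I do not anticipate any genuine obstacle in this step. All of the substantive work — in particular any extrapolation-type machinery needed to move between different integrability exponents while simultaneously varying the weight — has already been absorbed into the proof of Theorem \ref{thm:A1L1union}. What remains is purely formal: the corollary is an immediate consequence of the fact that a family of a priori $p$-dependent unions all collapse to one common exponent-free description, so no additional estimates or constructions are required.
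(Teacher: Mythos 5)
Your proposal is correct and is exactly how the paper derives this corollary: Theorem \ref{thm:A1L1union} identifies each union $\bigcup_{w\in A_p}L^p_w(\R^n)$ with the exponent-free set $\M_{A_1}(\R^n)\cap\big(\bigcup_{w\in A_1}L^1_w(\R^n)\big)$, so applying it at $p$ and at $q$ and using transitivity gives the claim. No further argument is needed.
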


Notice that if $w\in A_p(\R^n)$ then the maximal function is bounded
on $L^p_w(\R^n)$ and its dual space $L^{p'}_\sigma(\R^n)$, where
$\sigma=w^{1-p'}$ since $\sigma\in A_{p'}$.  It turns out this
property characterizes the union of all such function spaces.  For the
statement of our final theorem we will need the notion of a Banach
function space, which we refer the reader to Section \ref{prelim} for
more precise definitions.  Given a Banach function space, $\X$, we
denote the associate space by $\X'$ and we also write $M\in \B(\X)$ if
$M$ is a bounded operator on $\X$.  We end our introductory results with the following theorem that says a function belongs to a function space $\X$ for which the Hardy-Little maximal function is bounded on $\X$ and $\X'$ if and only if $f\in L^p_w(\R^n)$ for some $p>1$ and $w\in A_p(\R^n)$.

\begin{theorem} \label{thm:BFS} Suppose $1<p<\infty$ then
$$\bigcup_{w\in A_p}L^p_w(\R^n)=\bigcup\{\X:M\in
  \B(\X)\cap\B(\X')\},$$ where the second union is over all Banach
  function spaces such that the Hardy-Littlewood maximal operator is
  bounded on $\X$ and $\X'$.
\end{theorem}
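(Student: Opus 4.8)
The plan is to prove the two inclusions separately. Throughout I write $\X'$ for the associate space and use the generalized H\"older inequality $\int_{\R^n}|FG|\,dx\le\|F\|_\X\|G\|_{\X'}$, valid in any Banach function space by the definition of the associate norm. The forward inclusion $\bigcup_{w\in A_p}L^p_w(\R^n)\subseteq\bigcup\{\X:M\in\B(\X)\cap\B(\X')\}$ reduces to checking that each $L^p_w$ with $w\in A_p$ is itself one of the admissible spaces on the right. First I would verify that $L^p_w$ is a genuine Banach function space over $(\R^n,dx)$: the lattice and Fatou properties are immediate, while the local integrability axioms follow from $w,\sigma\in L^1_{\loc}$ (where $\sigma=w^{1-p'}$), the axiom controlling $\int_E f\,dx$ on sets of finite measure being a direct application of H\"older. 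The key identification is that the associate space is $(L^p_w)'=L^{p'}_\sigma$, again by H\"older. Since $w\in A_p$ forces $\sigma\in A_{p'}$, the Muckenhoupt theorem gives $M\in\B(L^p_w)$ and $M\in\B(L^{p'}_\sigma)=\B((L^p_w)')$, so $L^p_w$ lies in the right-hand union and therefore so do all of its elements.

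The reverse inclusion is the substance of the theorem, and the main tool is the Rubio de Francia iteration algorithm. Fix a Banach function space $\X$ with $M\in\B(\X)\cap\B(\X')$ and a function $f\in\X$; the goal is to produce a single weight $w\in A_p$ with $\int_{\R^n}|f|^pw\,dx<\infty$. For a nonnegative $g$ in a space on which $M$ is bounded, the algorithm $Rg=\sum_{k\ge0}M^kg/(2\|M\|)^k$ (with $M^k$ the $k$-fold iterate and $\|M\|$ the operator norm on that space) produces a function with $g\le Rg$, with norm at most $2\|g\|$, and lying in $A_1$ with $A_1$ constant at most $2\|M\|$. Applying this in $\X$ to $|f|$ yields $u:=R_\X|f|\in A_1$ with $|f|\le u$ and $\|u\|_\X\le2\|f\|_\X$; applying it in $\X'$ to any fixed normalized nonnegative $h\in\X'$ (which exists since $\X'$ is a nontrivial Banach function space) yields $v:=R_{\X'}h\in A_1$ with $\|v\|_{\X'}\le2$. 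Both hypotheses $M\in\B(\X)$ and $M\in\B(\X')$ are used here, one for each of the two $A_1$ weights.

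The decisive step is to fuse $u$ and $v$ correctly: I would set $w=v\,u^{1-p}$. By the elementary direction of the Jones factorization theorem, a product of an $A_1$ weight and the $(1-p)$-power of an $A_1$ weight is an $A_p$ weight, so $w\in A_p$. Moreover, because $0\le|f|\le u$ and $p-1>0$ we have $|f|^{p-1}w=v\,(|f|/u)^{p-1}\le v$ pointwise, whence the lattice property gives $\||f|^{p-1}w\|_{\X'}\le\|v\|_{\X'}\le2$. The generalized H\"older inequality then closes the argument:
\begin{equation*}
\int_{\R^n}|f|^pw\,dx=\int_{\R^n}|f|\cdot|f|^{p-1}w\,dx\le\|f\|_\X\,\||f|^{p-1}w\|_{\X'}\le2\|f\|_\X<\infty,
\end{equation*}
so $f\in L^p_w$ with $w\in A_p$, and hence $\X$ is contained in the left-hand union.

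I expect the main obstacle to be organizational rather than computational: the point is to recognize that the two boundedness hypotheses must feed two separate Rubio de Francia constructions, which are then merged through the single choice $w=v\,u^{1-p}$, engineered precisely so that $w\in A_p$ while simultaneously $|f|^{p-1}w$ is dominated by the $\X'$-function $v$. A secondary matter requiring care is confirming that each $L^p_w$ satisfies all Banach function space axioms and computing its associate space cleanly, since the right-hand side is only meaningful once $L^p_w$ is known to be admissible.
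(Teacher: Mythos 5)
Your proof is correct and follows essentially the same route as the paper's: both inclusions are handled identically, with the reverse inclusion resting on two Rubio de Francia iterations (one in $\X$, one in $\X'$) fused via the elementary direction of Jones factorization, $vu^{1-p}\in A_p$. The only difference is organizational: the paper first shows $f\in\M_{A_1}(\R^n)\cap\bigcup_{w\in A_1}L^1_w(\R^n)$ and then invokes Theorem \ref{thm:A1L1union} to identify that intersection with $\bigcup_{w\in A_p}L^p_w(\R^n)$, whereas you inline exactly that step by writing down the weight $w=vu^{1-p}$ and verifying $\int_{\R^n}|f|^pw\,dx<\infty$ directly.
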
 

We also mention the recent result of Chu \cite{C} who proved that 
$$\mathcal M_{A_1}(\R^n)=\bigcup\{\X:M\in
  \B(\X)\}.$$

The rest of this paper will be as follows.  In Section \ref{prelim} we
state preliminary results necessary for the rest of the paper.  In
Section \ref{classMpr} we make some remarks about the classes of
functions with $A_p$ majorants that hold for a general domain.  In
Section \ref{local} we develop the local theory, proving Theorems
\ref{thm:localmainr}, \ref{thm:localmaininfty},
\ref{thm:localApmajor}, \ref{thm:Hp>0} and \ref{thm:Hp>1}.  Section
\ref{global} is devoted to the global theory, in particular we prove
Theorems \ref{thm:mainglobal}, \ref{thm:globalApmajor},
\ref{thm:globalAinfty}, and \ref{thm:globalchar}.  We finish the manuscript with some open questions in Section \ref{questions}.

{\bf Acknowledgement.} We would like to thank Carlos P\'erez and Javier Duoandikoetxea for comments that helped improve the quality of this manuscript.  We would like to thank David Cruz-Uribe for Example \ref{ex:example1}. 

\section{Preliminaries} \label{prelim}
As mentioned in the introduction it is imperative that we separate the
local and global cases.  Hereafter, $\Omega$ will denote either $\R^n$
or a cube, $Q$, with sides parallel to the axes in $\R^n$.  Let us
begin with definition of Lebesgue spaces.  Given $p$, $1\leq
p\leq\infty$, we will use $p'$ to denote the dual exponent defined by
the equation $1/p+1/p'=1$.  For $0<p<\infty$, $L^p(\Omega)$ is the set
of measurable functions such that
$$\|f\|_{L^p(\Omega)}^p=\int_{\Omega}|f|^p\,dx<\infty.$$ Given a cube
$Q$ a weight on $Q$ will be a positive function in $L^1(Q)$.  A weight
on $\R^n$ will be a positive function in $L^1_{\text{loc}}(\R^n)$.
Given a weight, $w$, define $L^p_w(\Omega)$ to be functions normed by
$$\|f\|^p_{L^p_w(\Omega)}=\int_{\Omega}|f|^pw\,dx.$$

Let $M_\Omega$ be the Hardy-Littlewood maximal operator restricted to $\Omega$, i.e.,
$$M_\Omega f(x)=\sup_{\substack{Q\subset \Omega \\ x\in Q}}\frac{1}{|Q|}\int_Q |f|\,dy.$$
When $\Omega=\R^n$ we write $M_{\R^n}f=Mf$.  

We define $A_1(\Omega)$ to be the class of all weights on $\Omega$ such that $M_\Omega w(x)\leq Cw(x)$ a.e. $x\in \Omega.$
While $A_p(\Omega)$, for $p>1$, is the class of all weights on $\Omega$ such that
$$\sup_{Q\subset \Omega}\Big(\frac{1}{|Q|}\int_Q w\,dx\Big)\Big(\frac{1}{|Q|}\int_Q w^{1-p'}\,dx\Big)^{p-1}<\infty.$$
Given an $A_p$ weight $w$ we will refer to the weight $\sigma=w^{1-p'}$ as the dual weight.
For the endpoint, $p=\infty$, we will use the definition
$$A_\infty(\Omega)=\bigcup_{p\geq 1} A_p(\Omega).$$
There are several other definitions of $A_\infty$, e.g., weights satisfying a reverse Jensen inequality, a reverse H\"older inequality, or fairness condition with respect to Lebesgue measure \cite{Duo1,GrCl}.  

\begin{example} Let $x_0\in \Omega$, $1\leq p\leq\infty$, and $w_{x_0}(x)=|x-x_0|^\al$.  Then $w_{x_0}\in A_p(\Omega)$ if and only if $-n<\al<n(p-1)$.
\end{example}

The following theorem states some elementary properties of $A_p$ weights, most of which follow from the definition (see \cite[Proposition 7.2]{Duo1}).
\begin{theorem} \label{thm:properties} The following hold.
\begin{enumerate}[(i)]
\item $A_1\subset A_p\subset A_q\subset A_\infty$ if $1<p<q<\infty$.
\item For $1<p<\infty$, $w\in A_p$ if and only if $\sigma=w^{1-p'}\in A_{p'}$.
\item If $0<s\leq 1$ and $w\in A_p$ then $w^s\in A_p$.
\item If $u,v\in A_1$ then $uv^{1-p}\in A_p$.
\end{enumerate}
\end{theorem}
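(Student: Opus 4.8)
The plan is to obtain all four items directly from the definitions, using nothing beyond Jensen's inequality (equivalently H\"older's inequality) and the pointwise reformulation of the $A_1$ condition. It is convenient to abbreviate the $A_p$ characteristic as
$$[w]_{A_p}=\sup_{Q}\Big(\frac{1}{|Q|}\int_Q w\Big)\Big(\frac{1}{|Q|}\int_Q w^{1-p'}\Big)^{p-1},$$
and to record first that $w\in A_1$ is equivalent to the pointwise estimate $\frac{1}{|Q|}\int_Q w\le C\operatorname{ess\,inf}_Q w$ holding for every cube $Q$, since this is just the inequality $M_\Omega w\le Cw$ read off cube by cube.

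I would dispatch (iii) first, as it is the most transparent. For $0<s\le1$ the function $t\mapsto t^s$ is concave, so Jensen's inequality gives $\frac{1}{|Q|}\int_Q w^s\le\big(\frac{1}{|Q|}\int_Q w\big)^s$, and the same estimate applied to the dual weight $w^{1-p'}$ (whose $s$-th power is $(w^s)^{1-p'}$) controls the second factor; multiplying yields $[w^s]_{A_p}\le[w]_{A_p}^s$. The nesting in (i) splits into three pieces. The inclusion $A_q\subset A_\infty$ is immediate from $A_\infty=\bigcup_{p\ge1}A_p$. For $A_p\subset A_q$ with $p<q$ I would write $w^{1-q'}=(w^{1-p'})^{(p-1)/(q-1)}$ and apply Jensen with the concave exponent $(p-1)/(q-1)\le1$ to get $\big(\frac{1}{|Q|}\int_Q w^{1-q'}\big)^{q-1}\le\big(\frac{1}{|Q|}\int_Q w^{1-p'}\big)^{p-1}$, so that $[w]_{A_q}\le[w]_{A_p}$. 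Finally $A_1\subset A_p$ follows by bounding the dual factor $\frac{1}{|Q|}\int_Q w^{1-p'}\le(\operatorname{ess\,inf}_Q w)^{1-p'}$ and absorbing it against $\frac{1}{|Q|}\int_Q w$ via the pointwise $A_1$ bound, using $(1-p')(p-1)=-1$.

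For the duality (ii) I would simply exploit the symmetry of the characteristic. Since $(p')'=p$ and $(1-p')(1-p)=1$, the weight $\sigma=w^{1-p'}$ satisfies $\sigma^{1-(p')'}=\sigma^{1-p}=w$; feeding this into the definition of $[\sigma]_{A_{p'}}$ and comparing the exponents produces the clean identity $[\sigma]_{A_{p'}}=[w]_{A_p}^{1/(p-1)}$, so one characteristic is finite precisely when the other is. No inequality is needed here, only exponent arithmetic.

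Statement (iv) carries the heaviest bookkeeping and is the step I would handle most carefully. For $w=uv^{1-p}$ the dual weight collapses to $w^{1-p'}=u^{1-p'}v$, again using $(1-p)(1-p')=1$. I would then estimate the two averages separately: since $1-p<0$ and $1-p'<0$, monotonicity gives $\frac{1}{|Q|}\int_Q uv^{1-p}\le(\operatorname{ess\,inf}_Q v)^{1-p}\frac{1}{|Q|}\int_Q u$ and $\frac{1}{|Q|}\int_Q u^{1-p'}v\le(\operatorname{ess\,inf}_Q u)^{1-p'}\frac{1}{|Q|}\int_Q v$. Multiplying, the identity $(1-p')(p-1)=-1$ makes the $u$-factors combine into $\big(\frac{1}{|Q|}\int_Q u\big)/\operatorname{ess\,inf}_Q u$ and the $v$-factors into $\big(\big(\frac{1}{|Q|}\int_Q v\big)/\operatorname{ess\,inf}_Q v\big)^{p-1}$; the two $A_1$ bounds then give $[uv^{1-p}]_{A_p}\le[u]_{A_1}[v]_{A_1}^{p-1}$. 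The only genuine obstacle anywhere in the argument is keeping the exponents straight in (ii) and (iv); once the identities $(1-p)(1-p')=1$ and $(1-p')(p-1)=-1$ are in hand, every step reduces to a one-line application of Jensen's inequality or the pointwise $A_1$ bound.
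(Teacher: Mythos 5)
Your proof is correct and is essentially the standard argument; the paper itself does not prove Theorem \ref{thm:properties} but simply cites \cite[Proposition 7.2]{Duo1}, where the same elementary manipulations (Jensen's inequality plus the pointwise form $\frac{1}{|Q|}\int_Q w\le C\operatorname{ess\,inf}_Q w$ of the $A_1$ condition, together with the exponent identities $(1-p)(1-p')=1$ and $(1-p')(p-1)=-1$) appear. One small point: you work with the standard characteristic in which the dual factor carries the exponent $p-1$, whereas the paper's displayed $A_p$ condition omits that exponent --- this is evidently a typo in the paper, and your convention is the correct one to use; also, your Jensen argument for (iii) is written for $1<p<\infty$, but the $p=1$ case follows identically from the pointwise form and the $p=\infty$ case from $A_\infty=\bigcup_{p\ge1}A_p$.
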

It is interesting to note that the converse of (iv) also holds, but
the proof is much more intricate.  This was shown by Jones in
\cite{Jon}.  We emphasize that we will not need this converse
statement, only the statement (iv).  

We will also need the following deeper property of $A_\infty$ weights
known as the reverse H\"older inequality.  See \cite{HPR} for a simple
proof with nice constants.

\begin{theorem} \label{thm:rh} If $w\in A_\infty(\Omega)$, then there exists $s>1$ such that for every cube $Q\subset \Omega$
$$\frac{1}{|Q|}\int_Q w^s\,dx\leq \Big( \frac{2}{|Q|}\int_Q w\,dx\Big)^s.$$
\end{theorem}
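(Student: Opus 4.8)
The plan is to reduce to the Fujii--Wilson form of the $A_\infty$ condition and then run a layer--cake/Calder\'on--Zygmund self-improvement argument on a single cube. Since the paper defines $A_\infty(\Omega)=\bigcup_{p\ge 1}A_p(\Omega)$, a fixed $w\in A_\infty(\Omega)$ lies in some $A_q(\Omega)$, and one checks that such a weight has finite Fujii--Wilson constant
$$[w]_{A_\infty}:=\sup_{Q}\frac{1}{w(Q)}\int_Q M_\Omega(w\chi_Q)\,dx<\infty,$$
where $w(Q)=\int_Q w$. (This is the ``nice constant'' of \cite{HPR}; any other standard $A_\infty$ constant would also work, at the cost of a worse exponent.) It therefore suffices to produce a single exponent $s>1$, depending only on $n$ and $[w]_{A_\infty}$, for which the stated inequality holds uniformly in $Q$.

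Fix a cube $Q\subset\Omega$ and write $a=\frac{1}{|Q|}\int_Q w$. Let $M_Q^{\mathrm d}$ denote the dyadic maximal operator associated to the dyadic subcubes of $Q$. Since $w\le M_Q^{\mathrm d} w$ almost everywhere on $Q$ (Lebesgue differentiation along dyadic cubes) and $M_Q^{\mathrm d}w\ge a$ on all of $Q$ (the cube $Q$ itself is admissible), I would write, for $\epsilon=s-1>0$,
$$\int_Q w^{1+\epsilon}\,dx=\int_Q w\,w^{\epsilon}\,dx\le \int_Q w\,(M_Q^{\mathrm d}w)^{\epsilon}\,dx,$$
and expand the last factor by the layer--cake formula as an integral of $\lambda^{\epsilon-1}\,w\big(\{x\in Q:M_Q^{\mathrm d}w>\lambda\}\big)$ over $\lambda\in(0,\infty)$.

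The integral then splits at $\lambda=a$. For $\lambda\le a$ the super-level set is all of $Q$, and this part contributes exactly $a^{\epsilon}w(Q)=a^{1+\epsilon}|Q|$, i.e.\ (after dividing by $|Q|$) the leading term $a^{s}$ of the desired bound. For $\lambda>a$ I would run the local Calder\'on--Zygmund stopping-time decomposition of $w$ over $Q$ at height $\lambda$: the maximal dyadic subcubes $Q_j$ satisfy $\lambda<\frac{1}{|Q_j|}\int_{Q_j}w\le 2^n\lambda$ and $\{M_Q^{\mathrm d}w>\lambda\}=\bigsqcup_j Q_j$. Because $M_\Omega(w\chi_Q)>\lambda$ on each $Q_j$, these stopping cubes obey a Carleson packing estimate coming directly from the finiteness of $[w]_{A_\infty}$, and summing over the dyadic heights controls the tail ($\lambda>a$) contribution by a constant multiple of $\epsilon\,[w]_{A_\infty}\,a^{\epsilon}w(Q)$.

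The final, and most delicate, step is the bookkeeping: one chooses $\epsilon$ comparable to $1/(2^{n+1}[w]_{A_\infty})$ so that the tail contribution is dominated by the main term, and a geometric summation closes the estimate to give $\frac{1}{|Q|}\int_Q w^{1+\epsilon}\le 2\,a^{1+\epsilon}$, which is even slightly stronger than the stated bound $2^{s}a^{s}$. I expect this constant-tracking/absorption to be the real obstacle: the layer--cake and stopping-time steps are routine, but extracting precisely the factor $2$ requires the sharp Fujii--Wilson normalization of the $A_\infty$ constant rather than a generic one, together with care in summing the stopping-cube contributions so that no geometric series is wasted. A cruder route---using that $w\in A_q$ implies the self-improving comparison $w(E)/w(Q)\le C(|E|/|Q|)^{\delta}$ for $E\subset Q$ and integrating its distribution function---also yields an admissible exponent $s>1$, but only with a non-explicit constant, and is what I would fall back on if the sharp packing estimate proved awkward.
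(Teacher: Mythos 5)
The paper does not prove this theorem itself---it quotes it as known and points to \cite{HPR} for ``a simple proof with nice constants''---and your sketch is precisely the argument of that reference: reduction to the Fujii--Wilson constant, the layer-cake expansion of $\int_Q w\,(M_Q^{\mathrm d}w)^{\epsilon}\,dx$ split at the average, the local Calder\'on--Zygmund stopping cubes with the localization $M_Q^{\mathrm d}(w\chi_Q)\le M_Q^{\mathrm d}(w\chi_{Q_j})$ on each $Q_j$ feeding the packing estimate, and absorption with $\epsilon\simeq 1/(2^{n+1}[w]_{A_\infty})$. The outline is correct (including the fallback via the $A_q$ fairness condition), so there is nothing to add beyond noting that your route coincides with the one the paper delegates to.
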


As a corollary to Theorem \ref{thm:rh} we have the following openness properties of $A_p$ classes.
\begin{theorem} \label{thm:open} Let $1\leq p\leq \infty$.  The following hold
\begin{enumerate}[(i)]
\item $A_p(\Omega)=\bigcup_{1\leq q<p} A_q(\Omega)$.
\item If $w\in A_p(\Omega)$ then $w^s\in A_p(\Omega)$ for some $s>1$.
\end{enumerate}
\end{theorem}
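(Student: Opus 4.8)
The plan is to deduce both statements from the reverse H\"older inequality (Theorem \ref{thm:rh}), whose role is to let us raise an $A_\infty$ weight to a slightly larger power $s>1$ while keeping its average over each cube comparable to the original average, with a constant \emph{uniform in the cube}. For part (i), the inclusion $\bigcup_{1\le q<p}A_q(\Omega)\subset A_p(\Omega)$ is immediate from the nesting in Theorem \ref{thm:properties}(i), and the endpoint $p=\infty$ is exactly the definition $A_\infty=\bigcup_{q\ge 1}A_q$; thus the real content is the self-improvement $A_p(\Omega)\subset\bigcup_{1\le q<p}A_q(\Omega)$ for $1<p<\infty$ (the assertion being vacuous at $p=1$).

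To prove this self-improvement I would pass to the dual weight. Given $w\in A_p$ with $1<p<\infty$, Theorem \ref{thm:properties}(ii) gives $\sigma=w^{1-p'}\in A_{p'}\subset A_\infty$, so Theorem \ref{thm:rh} produces an exponent $s>1$ with $\frac{1}{|Q|}\int_Q\sigma^s\,dx\le\big(\frac{2}{|Q|}\int_Q\sigma\,dx\big)^s$ for every cube $Q\subset\Omega$. The idea is to select the smaller index so that raising $\sigma$ to the power $s$ reproduces the dual weight of $w$ at the new level: setting $q=1+\frac{p-1}{s}$ one has $1<q<p$, and the elementary identity $w^{1-q'}=w^{-1/(q-1)}=\sigma^{s}$ holds. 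Feeding the reverse H\"older bound on $\int_Q\sigma^s$ into the $A_q$ expression for $w$ then dominates it by a fixed multiple of the $A_p$ expression for $w$, whence $w\in A_q(\Omega)$.

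For part (ii) I would apply Theorem \ref{thm:rh} to both $w$ and its dual. When $1<p<\infty$, both $w$ and $\sigma=w^{1-p'}$ lie in $A_\infty$, so each admits a reverse H\"older exponent; taking $s>1$ no larger than either one and using the identity $(w^s)^{1-p'}=\sigma^s$, the $A_p$ expression for $w^s$ is controlled by the $s$-th power of the $A_p$ expression for $w$, giving $w^s\in A_p(\Omega)$. The case $p=1$ is even more direct: reverse H\"older applied to $w$ together with the pointwise bound $Mw\le Cw$ yields $M(w^s)(x)\le C^{s}(Mw(x))^{s}\le C'w(x)^{s}$ for a.e.\ $x$, so $w^s\in A_1(\Omega)$. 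The case $p=\infty$ follows by first writing $w\in A_q$ for some finite $q$ (the definition of $A_\infty$) and then invoking the finite case already established.

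The main obstacle, such as it is, is purely exponent bookkeeping: one must check that the single reverse H\"older exponent $s$ translates into an admissible smaller index $q=1+(p-1)/s$ in part (i), and into a single admissible power $s$ that works \emph{simultaneously} for $w$ and $\sigma$ in part (ii). No analytic input beyond Theorem \ref{thm:rh} and the nesting and duality of Theorem \ref{thm:properties} is needed; the feature exploited throughout is the cube-independence of the reverse H\"older constant, which is what lets each local estimate pass to a uniform bound over all cubes.
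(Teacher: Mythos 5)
Your proof is correct and takes essentially the approach the paper intends: the paper states Theorem \ref{thm:open} without proof, simply as a corollary of the reverse H\"older inequality (Theorem \ref{thm:rh}), and your argument is exactly the standard deduction --- applying reverse H\"older to the dual weight $\sigma=w^{1-p'}$ to lower the index in (i), and to both $w$ and $\sigma$ (with a common exponent) to raise the power in (ii), with the correct bookkeeping $q=1+(p-1)/s$ and $w^{1-q'}=\sigma^{s}$. The only caveat worth recording is that your estimates presume the standard normalization of the $A_p$ condition, in which the second factor $\frac{1}{|Q|}\int_Q w^{1-p'}\,dx$ carries the exponent $p-1$; the displayed definition in the paper omits that exponent, evidently a typo.
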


For the results on $\R^n$ we will need the notion of a Banach function space.  We refer the reader to book by Bennett and Sharpley \cite[Chapter 1]{BS} for an excellent reference on the subject.  A mapping $\rho$, defined on the set of non-negative $\R^n$-measurable functions and taking values in $[0,\infty]$, is said to be a Banach function norm if it satisfies the following properties:
\begin{enumerate}[(i)]
\item $\rho(f)=0 \Leftrightarrow f= 0$ a.e., $\rho(af)=a\rho(f)$ for $a>0$, $\rho(f+g)\leq \rho(f)+\rho(g)$;
\item if $0\leq f\leq g$ a.e., then $\rho(g)\leq \rho(f)$;
\item if $f_n\uparrow f$ a.e., then $\rho(f_n)\uparrow \rho(f)$;
\item if $B\subset \R^n$ is bounded then $\rho(\chi_B)<\infty$;
\item if $B\subset \R^n$ is bounded then $$\int_B f\,dx\leq C_B\rho(f)$$ 
for some constant $C_B$, $0<C_B<\infty$.
\end{enumerate}
We note that our definition of a Banach function space is slightly different than that found in \cite{BS}.  In particular, in the axioms (iv) and (v) we assume that the set $B$ is a bounded set, whereas it is sometimes assumed that $B$ merely satisfy $|B|<\infty$.  We do this so that the spaces $L^p_w(\R^n)$ with $w\in A_p$ satisfy items (iv) and (v).  (See also the discussion at the beginning of Chapter 1 on page 2 of \cite{BS}.)

Given Banach function norm $\rho$, $\X=\X(\R^n,\rho)$, is the collection of a measurable functions such that $\rho(|f|)<\infty$.  In this case we may equip $\X$ with the norm
$$\|f\|_{\X}=\rho(|f|).$$
Given a Banach function space we may define the associate space, $\X'$, as all measurable functions, $g$, such that $fg\in L^1(\R^n)$ for all $f\in \X$.  This space may be the normed by
\begin{equation}\label{associate} \|g\|_{\X'}=\sup\Big\{\int_{\R^n} |fg|\,dx: \|f\|_\X\leq 1\Big\}.\end{equation}
Equipped with this norm $\X'$ is also a Banach function space and 
$$\int_{\R^n}|fg|\,dx\leq \|f\|_{\X}\|g\|_{\X'}.$$
Typical examples of Banach function spaces are $L^p(\R^n)$, $1\leq p\leq \infty$ whose associate spaces are $L^{p'}(\R^n)$.  Other Banach spaces include weak type spaces $L^{p,\infty}(\R^n)$, the Lorentz space $L^{p,q}(\R^n)$, and Orlicz spaces $L^\Phi(\R^n)$ defined for a Young function $\Phi$ (see \cite{BS, CMP}).   When $w\in A_p(\R^n)$ and $1\leq p\leq \infty$ the spaces $L^p_w(\R^n)$ are also Banach function spaces with respect to Lebesgue measure.  To see this it suffices to check property (v): For $1<p<\infty$ if $B$ is bounded then $B\subset Q$ for some cube $Q$ so $\sigma(B)<\infty$, where  and
$$\int_B f\,dx= \int_B f w^{1/p}w^{-1/p}\,dx\leq \sigma(B)^{1/p'}\|f\|_{L^p_w}.$$
The space $L^\infty_w(\R^n)$ for $w\in A_\infty(\R^n)$ can be realized simply as $L^\infty(\R^n)$ since $w\in A_\infty$ implies $w>0$ a.e.  Finally, for $L^1_w(\R^n)$ and $w\in A_1(\R^n)$ note that
\begin{equation} \label{L1embed}\int_Bf\,dx=\int_Bfww^{-1}\,dx\leq (\inf_B w)^{-1}\|f\|_{L^1_w}.\end{equation}

The associate space of $L^p_w(\R^n)$ defined by the pairing in \eqref{associate} is given by $L^{p'}_\sigma(\R^n)$ where $\sigma=w^{1-p'}$, not $L^{p'}_w(\R^n)$.  We will be particularly interested in Banach functions spaces $\X$ for which
$$\|Mf\|_{\X}\leq C\|f\|_{\X}$$
in which case we write $M\in \B(\X)$.

We end this section with the classical result of Coifman and Rochberg \cite{CR} (see also \cite[Theorem 3.4, p. 158]{GCRdF}).  This result requires a definition.
\begin{definition} We say that a function belongs to $\M_{F}(\Omega)$ if 
$$M_\Omega f(x)<\infty, \quad \text{for a.e.} \ x\in \Omega.$$
\end{definition}
If $f$ belongs to a Banach function space for which $M\in \B(\X)$ then $f\in \M_F$.

\begin{theorem} \label{thm:coifroch} If $f\in \M_{F}(\Omega)$ and $0<\delta<1$ then $(M_\Omega f)^\delta \in A_1(\Omega)$.
\end{theorem}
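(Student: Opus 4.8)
The plan is to prove the Coifman--Rochberg theorem: if $f \in \M_F(\Omega)$ (so $M_\Omega f < \infty$ a.e.) and $0 < \delta < 1$, then $(M_\Omega f)^\delta \in A_1(\Omega)$. First I would reduce to verifying the defining inequality $M_\Omega\big((M_\Omega f)^\delta\big) \leq C (M_\Omega f)^\delta$ pointwise a.e. Fix a cube $Q \subset \Omega$; I need to bound $\frac{1}{|Q|}\int_Q (M_\Omega f)^\delta\,dx$ by a constant times $\operatorname*{ess\,inf}_{x\in Q}(M_\Omega f(x))^\delta$. The natural device is to split $f = f_1 + f_2$ where $f_1 = f\chi_{3Q}$ (the local part, supported on a dilate $3Q$) and $f_2 = f\chi_{\Omega \setminus 3Q}$ (the far part), and to use subadditivity of the maximal operator: $(M_\Omega f)^\delta \leq (M_\Omega f_1)^\delta + (M_\Omega f_2)^\delta$, valid precisely because $0 < \delta < 1$ makes $t \mapsto t^\delta$ subadditive.

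\smallskip

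For the local part, I would invoke the Kolmogorov-type inequality: since $M_\Omega$ maps $L^1$ into weak-$L^1$, for any $0 < \delta < 1$ one has
\begin{equation*}
\frac{1}{|Q|}\int_Q (M_\Omega f_1)^\delta\,dx \leq \frac{C_\delta}{|Q|}\Big(\int_{3Q}|f|\,dx\Big)^\delta \cdot |Q|^{-\delta}\cdot|Q| = C_\delta\Big(\frac{1}{|3Q|}\int_{3Q}|f|\,dx\Big)^\delta,
\end{equation*}
where the constant $C_\delta$ comes from integrating the weak-type distribution function (this is the standard computation $\int_Q (Mg)^\delta \leq \frac{C}{1-\delta}|Q|^{1-\delta}\|g\|_{L^1}^\delta$). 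The right-hand side is controlled by $\big(\inf_{Q} M_\Omega f\big)^\delta$ up to a dimensional constant, since for any $x \in Q$ the average over $3Q$ is dominated by $M_\Omega f(x)$ (note $3Q \subset \Omega$ must be checked, and in the cube case one uses that averages over $3Q\cap\Omega$ suffice, or restricts attention appropriately). For the far part, I would show $M_\Omega f_2$ is essentially constant on $Q$: for $x \in Q$, any cube $R$ containing $x$ that meets the support of $f_2$ (i.e.\ reaches outside $3Q$) must be comparable in size to a cube centered near $Q$, so $M_\Omega f_2(x) \leq C\, M_\Omega f(y)$ for every $y \in Q$; hence $\sup_{x\in Q} M_\Omega f_2(x) \leq C \inf_{y\in Q} M_\Omega f(y)$.

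\smallskip

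Combining the two parts gives $\frac{1}{|Q|}\int_Q (M_\Omega f)^\delta\,dx \leq C\big(\inf_Q M_\Omega f\big)^\delta \leq C (M_\Omega f(x))^\delta$ for a.e.\ $x \in Q$, which is exactly the $A_1$ condition with the averaging over an arbitrary cube. The hypothesis $M_\Omega f < \infty$ a.e.\ guarantees that $(M_\Omega f)^\delta$ is finite a.e.\ and locally integrable, so it is a genuine weight and the inequality is not vacuous. The main obstacle, and the step requiring the most care, is the far-part estimate: one must argue geometrically that enlarging the truncation to $3Q$ forces any competing cube for $f_2$ to be large enough that its average is comparable across all of $Q$, and in the restricted-domain setting $\Omega = Q_0$ one must confirm these dilated cubes and the maximal function $M_\Omega$ interact correctly (handled by the standard trick of comparing $M_\Omega$ on a subdomain to the global operator, or by noting the inequality is scale-free). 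This is precisely where the classical proof concentrates its effort.
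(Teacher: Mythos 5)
Your argument is correct and is precisely the classical Coifman--Rochberg proof: the paper itself gives no proof of Theorem \ref{thm:coifroch}, simply citing \cite{CR} and \cite[Theorem 3.4, p.~158]{GCRdF}, and your splitting into $f\chi_{3Q}$ and $f\chi_{\Omega\setminus 3Q}$, Kolmogorov's inequality for the local piece, and the ``essentially constant on $Q$'' estimate for the far piece is exactly the argument in those references. The two points you flag as needing care --- local integrability of $f$ (which follows from $M_\Omega f<\infty$ at a single point) and the interaction of the dilated cubes with the restricted domain when $\Omega$ is itself a cube --- are the right ones, and both resolve in the standard way.
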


We leave the reader with the following table of notation that will be used throughout the manuscript.

\begin{center}
\begin{tabular}{cl}
$\Omega$& Domain of interest, either $\R^n$ or a cube $Q\subset \R$;\\
$M_\Omega$& Hardy-Littlewood maximal operator restricted to $\Omega$;\\
$A_p(\Omega)$& class of $A_p$ weights on $\Omega$;\\
$\M_{A_p}^r(\Omega)$&  functions on $\Omega$ with $|f|^r$ majorized by an $A_p$ weight;\\
$\M_F(\Omega)$ & functions on $\Omega$ such that $M_\Omega f<\infty$ a.e.;\\
$A_p^F(\Omega)$ & $A_p(\Omega)\cap \M_F(\Omega)$;\\
$\M_{A_p^F}(\Omega)$ & functions majorized by $A_p^F(\Omega)$ weights.

\end{tabular}
\end{center}

\section{The classes $\M^r_{A_p}$} \label{classMpr}

Let us now define a general class of functions majorized by $A_p$
weights and establish some properties of such classes.  We remind the
reader that throughout a domain $\Omega$ will denote all of $\R^n$ or
a cube $Q$ in $\R^n$.

\begin{definition}\label{defn:Apmaj} Let $r$ and $p$ satisfy $0<r<\infty$ and $1\leq p\leq \infty$.  Define $\M_{A_p}^r(\Omega)$ to be the collection of all measurable functions on $\Omega$, $f$, such that there exists $w\in A_p(\Omega)$ with
$$|f(x)|^r\leq w(x) \qquad \text{for} \ a.e.\ x\in \Omega.$$
When $r=1$ we simply write $\M_{A_p}(\Omega)$.
\end{definition}

Theorem \ref{thm:open} implies the following general facts about the  $\M^r_{A_p}$ classes.

\begin{theorem} Suppose $r$ and $p$ satisfy $0<r<\infty$ and $1\leq p\leq \infty$.  Then, there holds
\begin{equation}\label{eqn:openr}\M^r_{A_p}(\Omega)=\bigcup_{s>r} \M^s_{A_p}(\Omega)\end{equation}
and if $p>1$,
\begin{equation}\label{eqn:openp}\M^r_{A_p}(\Omega)=\bigcup_{1\leq q<p} \M^r_{A_q}(\Omega).\end{equation}
\end{theorem}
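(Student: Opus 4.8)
The plan is to prove the two displayed identities \eqref{eqn:openr} and \eqref{eqn:openp} separately, and in each case to establish the two set inclusions. Throughout, I rely only on Theorem~\ref{thm:open}, which gives both the openness of the $A_p$ classes in the exponent $p$ (part (i)) and the self-improvement of a single $A_p$ weight to a higher power (part (ii)).

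\medskip
\noindent\emph{Proof of \eqref{eqn:openr}.} For the inclusion $\bigcup_{s>r}\M^s_{A_p}(\Omega)\subseteq \M^r_{A_p}(\Omega)$, suppose $f\in\M^s_{A_p}(\Omega)$ for some $s>r$, so that $|f|^s\le w$ for some $w\in A_p(\Omega)$. Raising to the power $r/s\in(0,1)$ gives $|f|^r\le w^{r/s}$, and since $0<r/s\le 1$, Theorem~\ref{thm:properties}(iii) (which holds for all $1\le p\le\infty$, the case $p=\infty$ following from the definition $A_\infty=\bigcup_q A_q$ together with part (iii) applied at a finite exponent) shows $w^{r/s}\in A_p(\Omega)$; hence $f\in\M^r_{A_p}(\Omega)$. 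The reverse inclusion is the substantive direction: given $f\in\M^r_{A_p}(\Omega)$, pick $w\in A_p(\Omega)$ with $|f|^r\le w$. By Theorem~\ref{thm:open}(ii) there is an $s_0>1$ with $w^{s_0}\in A_p(\Omega)$. Setting $s=s_0\,r>r$ we have $|f|^s=(|f|^r)^{s_0}\le w^{s_0}$, so $f\in\M^s_{A_p}(\Omega)$ with $s>r$, proving $\M^r_{A_p}(\Omega)\subseteq\bigcup_{s>r}\M^s_{A_p}(\Omega)$.

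\medskip
\noindent\emph{Proof of \eqref{eqn:openp}.} Assume $p>1$. The inclusion $\bigcup_{1\le q<p}\M^r_{A_q}(\Omega)\subseteq \M^r_{A_p}(\Omega)$ is immediate from the nesting $A_q\subset A_p$ for $q<p$ (Theorem~\ref{thm:properties}(i)): if $|f|^r\le w$ with $w\in A_q(\Omega)$ then the same $w$ lies in $A_p(\Omega)$. For the reverse inclusion, take $f\in\M^r_{A_p}(\Omega)$ with $|f|^r\le w$, $w\in A_p(\Omega)$. Now I invoke the openness of $A_p$ in the exponent, Theorem~\ref{thm:open}(i), which states $A_p(\Omega)=\bigcup_{1\le q<p}A_q(\Omega)$; thus the very weight $w$ already belongs to $A_q(\Omega)$ for some $q<p$, and the same majorization $|f|^r\le w$ witnesses $f\in\M^r_{A_q}(\Omega)$ with $q<p$.

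\medskip
The anticipated main obstacle is essentially bookkeeping rather than mathematics: the whole theorem is a transfer of the two openness facts of Theorem~\ref{thm:open} from weights to the majorant classes, so the only care needed is (a) keeping the direction of the power inequalities correct when exponentiating by $r/s<1$ versus $s_0>1$, and (b) confirming that Theorem~\ref{thm:properties}(iii) and Theorem~\ref{thm:open}(ii) are available at the endpoint $p=\infty$ in \eqref{eqn:openr}. For (b), one unwinds $A_\infty=\bigcup_{q\ge 1}A_q$: a weight in $A_\infty$ lies in some $A_{q}$, to which the finite-exponent statements apply, and the resulting improved weight stays in $A_\infty$. No delicate estimate is required.
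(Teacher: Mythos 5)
Your proposal is correct and follows essentially the same route as the paper: the easy inclusions come from Theorem~\ref{thm:properties}(iii) and the nesting of the $A_q$ classes, while the substantive directions are exactly the transfer of Theorem~\ref{thm:open}(ii) (raise the majorant $w$ to a power $s_0>1$, getting $s=rs_0>r$) and Theorem~\ref{thm:open}(i) (the same weight already lies in some $A_q$ with $q<p$). Your extra remark about unwinding $A_\infty=\bigcup_{q\ge 1}A_q$ at the endpoint is a harmless clarification the paper leaves implicit.
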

\begin{proof} We first prove \eqref{eqn:openr}.  It is clear from (iii) of Theorem \ref{thm:properties} that $\bigcup_{r<s}\M_{A_p}^s(\Omega)\subset \M_{A_p}^r(\Omega)$.  On the other hand, if $f\in \M^r_{A_p}(\Omega)$ then $|f|^r\leq w\in A_p$.  By (ii) of Theorem there exists $t>1$ such that $w^t\in A_p(\Omega)$, but then taking $s=rt>r$ and $u=w^t$ we have $|f|^s\leq u\in A_p$, so $f\in \bigcup_{r<s} \M^s_{A_p}(\Omega)$.  The proof of equality \eqref{eqn:openp} follows directly from (i) of Theorem \ref{thm:properties} and (i) of Theorem \ref{thm:open}.
\end{proof}

Our next Theorem shows that for a function to have an $A_1$ majorant it is equivalent for its maximal function to have an $A_1$ majorant.
\begin{theorem} \label{thm:A1max} $f\in \M_{A_1}(\Omega)$ if and only if $M_\Omega f\in \M_{A_1}(\Omega).$ 
\end{theorem}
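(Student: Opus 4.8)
The plan is to prove both implications, of which one is immediate and the other is the substance of the theorem.

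\medskip

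\noindent\textbf{The easy direction.} First I would handle the implication $M_\Omega f \in \M_{A_1}(\Omega) \Rightarrow f \in \M_{A_1}(\Omega)$. This should follow almost immediately from the pointwise bound $|f(x)| \leq M_\Omega f(x)$, which holds for a.e.\ $x$ by the Lebesgue differentiation theorem (using that $f$ is locally integrable, which is guaranteed since possessing an $A_1$ majorant forces local integrability). If $w \in A_1(\Omega)$ satisfies $M_\Omega f \leq w$, then $|f| \leq M_\Omega f \leq w$, so the same weight $w$ is an $A_1$ majorant for $f$.

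\medskip

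\noindent\textbf{The hard direction.} The forward implication $f \in \M_{A_1}(\Omega) \Rightarrow M_\Omega f \in \M_{A_1}(\Omega)$ is the real content. Suppose $|f| \leq w$ with $w \in A_1(\Omega)$. Since the maximal operator is monotone, $M_\Omega f \leq M_\Omega w$. The natural candidate for an $A_1$ majorant of $M_\Omega f$ is therefore $M_\Omega w$ itself, and I would try to show $M_\Omega w \in A_1(\Omega)$. But this is exactly the $A_1$ condition applied to $w$: the definition of $A_1(\Omega)$ gives $M_\Omega w \leq C w$ a.e., and in particular $M_\Omega w$ is comparable to $w$, so $M_\Omega(M_\Omega w) \leq M_\Omega(Cw) = C\,M_\Omega w \leq C^2 w \leq C^2 M_\Omega w$ a.e. Thus $M_\Omega w \in A_1(\Omega)$, and since $M_\Omega f \leq M_\Omega w$, the weight $M_\Omega w$ is an $A_1$ majorant of $M_\Omega f$.

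\medskip

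\noindent\textbf{Main obstacle and alternatives.} The main point to get right is that $M_\Omega w \in A_1(\Omega)$ whenever $w \in A_1(\Omega)$; the argument above reduces this to the self-improving nature of the $A_1$ condition, namely that $M_\Omega w \approx w$, which makes $M_\Omega w$ an $A_1$ weight directly. The only subtlety I anticipate is ensuring $M_\Omega w$ is finite a.e.\ (so that it genuinely is a weight), but this is immediate from $M_\Omega w \leq Cw$ together with $w$ being locally integrable. A slightly different packaging would appeal to Theorem \ref{thm:coifroch} (Coifman--Rochberg): one could instead dominate $M_\Omega f$ by a power-weight construction, but the direct route via $M_\Omega w \approx w$ is cleaner and self-contained. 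I would present the direct argument and keep the verification that $M_\Omega w$ is a well-defined $A_1$ weight as the single technical check.
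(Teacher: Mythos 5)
Your proof is correct and takes essentially the same route as the paper: both directions rest on the chain $M_\Omega f \leq M_\Omega w \leq Cw$ for the forward implication and on $|f|\leq M_\Omega f$ for the converse. The only (immaterial) difference is that the paper simply takes $Cw$ as the $A_1$ majorant of $M_\Omega f$, whereas you take $M_\Omega w$ and then verify separately that $M_\Omega w \in A_1(\Omega)$ --- a harmless extra step.
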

\begin{proof} If $f\in \M_{A_1}(\Omega)$ then we have $M_\Omega f\leq M_\Omega w\leq Cw$ since $w\in A_1(\Omega)$, which is to say $M_\Omega f \in \M_{A_1}(\Omega)$.  The converse statement follows from the fact that $|f|\leq M_{\Omega} f$.
\end{proof}

Using the exact same reasoning it is easy to prove that $f\in \M^r_{A_1}(\Omega)$ if and only if $M_\Omega(|f|^r)\in \M_{A_1}(\Omega)$.  However, we can do slightly better when $r\geq 1$.

\begin{theorem} \label{thm:r>1} If $r\geq 1$ then the following are equivalent.
\begin{enumerate}[(i)]
\item $f\in \M^r_{A_1}(\Omega)$.
\item $M_\Omega(|f|^r)\in \M_{A_1}(\Omega)$.
\item $M_\Omega f\in \M_{A_1}^r(\Omega)$.
\end{enumerate}

\end{theorem}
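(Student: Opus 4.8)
The plan is to exploit the fact that the equivalence (i) $\Leftrightarrow$ (ii) is already in hand: it is exactly the statement recorded in the remark preceding the theorem, obtained by applying Theorem \ref{thm:A1max} to the function $|f|^r$ in place of $f$. With that equivalence granted, it suffices to close a cycle by proving (ii) $\Rightarrow$ (iii) $\Rightarrow$ (i). Both remaining implications reduce to a single pointwise inequality apiece, together with the trivial downward-closure of $\M_{A_1}(\Omega)$: if $0\le g\le h$ a.e.\ and $h\le w$ for some $w\in A_1(\Omega)$, then $g\le w$ as well, so $g\in\M_{A_1}(\Omega)$.

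For (iii) $\Rightarrow$ (i), I would use the elementary bound $|f|\le M_\Omega f$ a.e., which holds at every Lebesgue point of $f$ by the Lebesgue differentiation theorem. Raising to the power $r$ gives $|f|^r\le (M_\Omega f)^r$ a.e. Hence if (iii) holds, so that $(M_\Omega f)^r\le w$ for some $w\in A_1(\Omega)$, then $|f|^r\le w$ and $f\in\M^r_{A_1}(\Omega)$, which is (i).

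For (ii) $\Rightarrow$ (iii), the key is Jensen's inequality, and this is the only step that genuinely uses $r\ge 1$ rather than $r>0$. Applying Jensen to the probability measure $dy/|Q|$ on each cube $Q$, for $r\ge 1$ we have
$$\Big(\frac{1}{|Q|}\int_Q |f|\,dy\Big)^r \le \frac{1}{|Q|}\int_Q |f|^r\,dy,$$
and taking the supremum over cubes $Q\subset\Omega$ containing $x$ yields the pointwise bound $(M_\Omega f(x))^r\le M_\Omega(|f|^r)(x)$. Consequently, if (ii) holds, so that $M_\Omega(|f|^r)\le w$ for some $w\in A_1(\Omega)$, then $(M_\Omega f)^r\le w$ and $M_\Omega f\in\M^r_{A_1}(\Omega)$, which is (iii).

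Combining these with the known equivalence (i) $\Leftrightarrow$ (ii) completes the cycle (ii) $\Rightarrow$ (iii) $\Rightarrow$ (i) $\Leftrightarrow$ (ii). I do not expect any real obstacle: the entire content is carried by the inequality $(M_\Omega f)^r\le M_\Omega(|f|^r)$, which fails for $0<r<1$ and thereby explains why this cleaner three-way equivalence is available only in the range $r\ge 1$.
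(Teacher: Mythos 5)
Your proposal is correct and follows essentially the same route as the paper: (i) $\Leftrightarrow$ (ii) from Theorem \ref{thm:A1max}, (ii) $\Rightarrow$ (iii) via the pointwise inequality $(M_\Omega f)^r\le M_\Omega(|f|^r)$ for $r\ge 1$, and (iii) $\Rightarrow$ (i) via the Lebesgue differentiation theorem. The only detail the paper makes explicit that you leave implicit is that (iii) forces $M_\Omega f<\infty$ a.e., hence $f$ is locally integrable, which is what licenses the differentiation theorem.
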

\begin{proof} The equivalence (i) $\Leftrightarrow$ (ii) follows from Theorem \ref{thm:A1max}.  We will prove (ii) $\Rightarrow$ (iii) and (iii) $\Rightarrow$ (i).  

Suppose that $w\in A_1(\Omega)$ and $M_\Omega(|f|^r)\leq w$.  Since $r\geq 1$ we have $(M_\Omega f)^r\leq M_\Omega(|f|^r)\leq w$ which is to say that $M_\Omega f\in \M_{A_1}^r$.  

On the other hand if $(M_\Omega f)^r\leq w\in A_1(\Omega)$, so $M_\Omega f<\infty$ a.e., and hence $f$ is locally integrable on $\Omega$.  By the Lebesgue differentiation theorem we have 
$$|f|^r\leq (M_\Omega f)^r \leq w.$$
\end{proof}

For the case $0<r<1$ we still have that $f\in \M_{A_1}^r(\Omega)$ if and only if $M_\Omega(|f|^r)\in \M_{A_1}(\Omega)$, however it is not true that this is equivalent to $(M_\Omega f)^r\in \M_{A_1}(\Omega)$.  Consider the simple example.

\begin{example} Let $f(x)=|x|^{-n}$ on $Q=[-1,1]^n$.  If $0<r<1$ then $f\in M_{A_1}^r(Q)$ but $M_Q f\equiv\infty$.
\end{example}

Of course if $0<r<1$ and $M_\Omega f<\infty$ a.e., then $(M_\Omega f)^r\in A_1(\Omega)$ (hence $M_\Omega f\in \M_{A_1}^r(\Omega)$) automatically by Theorem \ref{thm:coifroch}.

\section{The local case} \label{local}

For this section $Q$ will be a fixed cube in $\R^n$.  We begin
with a proof Theorem \ref{thm:localmainr}.

\begin{proof}[Proof of Theorem \ref{thm:localmainr}] We will prove the chain of containments:
$$\bigcup_{w\in A_{\frac{p_0}{r}}} L^{p_0}_w(Q)\subset \bigcup_{p>r}L^p(Q)\subset \M_{A_1}^r(Q)\subset\bigcup_{w\in A_{\frac{p_0}{r}}} L^{p_0}_w(Q).$$

\begin{itemize}

\item $\Big(\bigcup_{w\in A_{\frac{p_0}{r}}} L^{p_0}_w(Q)\subset \bigcup_{p>r}L^p(Q)\Big)$: Suppose $f\in L^{p_0}_w(Q)$ for some $w\in A_{\frac{p_0}{r}}(Q)$.  Set $q_0=p_0/r$, then by (ii) of Theorem \ref{thm:properties}, we have $\sigma=w^{1-q_0'}\in A_{q_0'}(Q)$ and by Theorem \ref{thm:rh} satisfies a reverse H\"older inequality:
$$ \Big(\dashint_{Q'} \sigma^s\,dx\Big)^{1/s}\leq C\dashint_{Q'}\sigma\,dx$$
for some $s>1$ and all $Q'\subseteq Q$.  This implies that $\sigma \in L^s(Q)$.  Define $\frac1q=\frac{1}{q_0}+\frac{1}{sq_0'}$ so that $q>1$ and let $p=rq>r$.  Then
\begin{align*}\Big(\int_{Q} |f|^p\,dx\Big)^{1/p}&=\Big(\int_{Q} |f|^{rq}w^{q/q_0}w^{-q/q_0}\,dx\Big)^{1/p}\\
&\leq \Big(\int_{Q}|f|^{p_0}w\,dx\Big)^{1/p_0}\Big(\int_{Q}\sigma^s\,dx\Big)^{1/(sq_0')}.
\end{align*}

\item $\Big(\bigcup_{p>r}L^p(Q)\subset \M_{A_1}^r(Q)\Big)$: If $f\in L^p(Q)$ for some $p>r$ then $|f|^r\leq M_Q(|f|^p)^{r/p}\in A_1(\Omega)$ by Theorem \ref{thm:coifroch}.

\item $\Big(\M_{A_1}^r(Q)\subset \bigcup_{w\in A_{\frac{p_0}{r}}} L^{p_0}_w(Q)\Big)$: Set $q_0=p_0/r>1$ and suppose $g=|f|^r\leq w\in A_1(Q)$.  Then $w^{1-q_0}\in A_{q_0}(Q)$ by (iv) of Theorem \ref{thm:properties} and
$$\int_Q |f|^{p_0}w^{1-q_0}\,dx=\int_Q g^{q_0}w^{1-q_0}\,dx\leq \int_Q w\,dx<\infty.$$
\end{itemize}

\end{proof}

Before we move on, we observe the following Corollary.
\begin{corollary} If $1<p,q<\infty$ then
$$\bigcup_{w\in A_p}L^p_w(Q)=\bigcup_{w\in A_q}L^q_w(Q).$$
\end{corollary}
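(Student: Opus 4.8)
The plan is to deduce the corollary directly from Theorem \ref{thm:localmainr}, specialized to the exponent $r=1$. With $r=1$ and an arbitrary exponent $p_0>1$, that theorem reads
$$\M_{A_1}(Q)=\bigcup_{s>1} L^s(Q)=\bigcup_{w\in A_{p_0}}L^{p_0}_w(Q),$$
where I have renamed the dummy exponent in the middle union to $s$ to avoid a clash with the fixed exponents $p,q$ appearing in the corollary.

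The first and only real observation I would make is that the middle term $\bigcup_{s>1}L^s(Q)$ is manifestly independent of the choice of $p_0$. Thus the single exponent-free space $\bigcup_{s>1}L^s(Q)$ serves as a fixed ``hub'' through which every local weighted-$L^{p_0}$ union is identified, and the corollary is nothing more than the statement that two such unions share this hub.

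Concretely, I would apply the displayed identity once with $p_0=p>1$ to get $\bigcup_{w\in A_p}L^p_w(Q)=\bigcup_{s>1}L^s(Q)$, and once more with $p_0=q>1$ to get $\bigcup_{w\in A_q}L^q_w(Q)=\bigcup_{s>1}L^s(Q)$. Chaining these two equalities through their common right-hand side yields
$$\bigcup_{w\in A_p}L^p_w(Q)=\bigcup_{w\in A_q}L^q_w(Q),$$
which is exactly the assertion.

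I expect no genuine obstacle here: the result is a bookkeeping consequence of Theorem \ref{thm:localmainr} having already collapsed each local weighted union onto the exponent-free space $\bigcup_{s>1}L^s(Q)$. The only point requiring mild care is the renaming of dummy variables, so that the integration exponent in the common hub is not conflated with the fixed exponents $p$ and $q$; once that is done, the two-step chaining through the hub is immediate.
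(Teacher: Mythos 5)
Your proposal is correct and is exactly the argument the paper intends: the corollary is stated immediately after Theorem \ref{thm:localmainr}, whose $r=1$ case identifies each union $\bigcup_{w\in A_{p_0}}L^{p_0}_w(Q)$ with the exponent-free set $\bigcup_{s>1}L^s(Q)$, and applying this with $p_0=p$ and $p_0=q$ gives the result. Nothing further is needed.
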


\begin{proof}[Proof of theorem \ref{thm:localmaininfty}] We first prove
$$\bigcup_{r>0}\M^r_{A_1}(Q)=\bigcup_{p>0}L^p(Q).$$
\begin{itemize}
\item ($\subset$): If $f\in \M^r_{A_1}(Q)$ for some $r>0$ and $w\in A_1(Q)$ is such that $|f|^r\leq w,$ then   $f\in L^r(Q)\subset \bigcup_{p>0}L^p(Q)$. 

\item ($\supset$):  If $f\in L^p(Q)$ for some $p>0$ let $r$ be such that $0<r<p$, then $|f|^r\leq M_Q(|f|^p)^{r/p}\in A_1(Q)$.
\end{itemize}

Next we show
$$\bigcup_{p>0}L^p(Q)=\bigcup_{w\in A_\infty}L^{p_0}_w(Q).$$

\begin{itemize}

\item ($\subset$): Suppose $f\in L^p(Q)$ for some $0<p<\infty$, then if $r<\min(p,p_0)$ we have
$$f\in L^p(Q)\subset \bigcup_{r<p} L^p(Q)=\bigcup_{w\in A_{\frac{p_0}{r}}}L^{p_0}_w(Q)\subset \bigcup_{w\in A_{\infty}} L^{p_0}_w(Q).$$

\item ($\supset$): Suppose $f\in L^{p_0}_w(Q)$ for some $w\in A_{\infty}$, then $w\in A_q$ for some $q>1$.  Set $p=p_0/q$ and notice that $p<p_0$.  Then
$$\int_{Q}|f|^p\,dx=\int_{Q}|f|^p w^{1/q}w^{-1/q}\,dx\leq \Big(\int_{Q}|f|^{p_0}w\,dx\Big)^{1/q}\Big(\int_{Q}w^{1-q'}\,dx\Big)^{1/q'}.$$

\end{itemize}

\end{proof}

\begin{example} \label{ex:example1} The function 
\begin{equation}\label{example1}f(x)=x^{-1}(\log x)^{-2}\chi_{(0,1/2)}(x)\end{equation}
does not belong to $\M_{A_1}([0,1])$.  This follows from Theorem \ref{thm:localmain} since it can be readily checked that 
$$f\in L^1([0,1])\backslash\Big(\bigcup_{p>1}L^p([0,1])\Big).$$
However, $f\in \M_{F}([0,1])$ since $f\in L^1([0,1])$. 
\end{example}

Before we prove Theorem \ref{thm:localApmajor} we make an observation: In order to get a smaller class of functions than $L^p(Q)$ one has to union over $w\in A_p$ for $p>1$. 

\begin{remark} \label{LpA1union} Suppose $0<p<\infty$, then 
$$L^p(Q)=\bigcup_{w\in A_1} L^p_w(Q).$$
\end{remark}

The proof of the equality in Remark \ref{LpA1union} follows the fact that $1\in A_1$ and inequality\eqref{L1embed} with $B=Q$.

\begin{proof}[Proof of Theorem \ref{thm:localApmajor}] 
It suffices to show $\M_{A_\infty}(Q)\subset \M_{A_1}(Q)$.  Suppose
that $f\in \M_{A_\infty}(Q)$ so that there exists $w\in A_\infty(Q)$
with
$$|f|\leq w.$$ Since $w\in A_\infty(Q)$, the reverse H\"older
inequality implies that there exists $s>1$ such that
$$(M_Qw^s)^{1/s}\leq 2M_Qw\leq 2(M_Qw^s)^{1/s}.$$ Moreover, since
$w\in L^1(Q)$, $M_Qw<\infty$ a.e., and hence by Theorem
\ref{thm:coifroch} $M_Qw\in A_1(Q)$ and hence $f\in \M_{A_1}(Q)$.
\end{proof}

\begin{proof}[Proof of Theorems \ref{thm:Hp>0} and \ref{thm:Hp>1}]
Since $N^{+}\cap L^{p}(\T) = H^p$ for $p>0$ (\cite{Dur} Theorem 2.11), we see that
$$H_{A_1}(\T)=N^+\cap \M_{\A_1}(\T)   = N^{+} \cap \bigcup_{p>1}L^p(\T) =
\bigcup_{p>1} H^p.$$ This is the first part of Theorem \ref{thm:Hp>1}.

To go from equality of the analogous $L^p$ spaces to the Hardy spaces
is a matter of using two facts for $0<p_0<\infty$
\begin{enumerate}
\item
\[
\int_\T \log w \,d\theta> -\infty \text{ and } w \in L^1(\T) \text{ implies }
w=|h|^{p_0}
\]
for some outer function $h \in H^{p_0}$.

\item If $h \in H^{p_0}$ is outer, then the set $h\C[z] = \vee\{z^jh: j \geq
  0\}$ is dense in $H^{p_0}$.
\end{enumerate}
Item (1) comes from the standard construction of an outer function
(see \cite{Dur} Section 2.5).  As for item (2), when $1\leq p_0
<\infty$ this is a standard generalization of Beurling's theorem
(\cite{Dur} Theorem 7.4).  When $0<p_0 < 1$, this is a less well known
result that can be found in Gamelin \cite{Gam}, Theorem 4.

For Theorem \ref{thm:Hp>1} we must show for $1<p_0<\infty$
\[
\bigcup_{p>1} H^p = \bigcup_{w \in A_{p_0}} H^{p_0}_w.
\]

Now, for $f \in H^p\subset L^p$, we know there exists $w \in
A_{p_0}(\T)$ such that $f \in L^{p_0}_w(\T)$ by Theorem
\ref{thm:localmain}.  Factor $w=|h|^{p_0}$ with outer $h\in
H^{p_0}$.  Then, $fh \in N^{+} \cap L^{p_0}(\T) = H^{p_0}$ while $h
\C[z]$ is dense in $H^{p_0}$ so that there exist polynomials $Q_n$
satisfying
\[
\int |fh-Q_n h|^{p_0}\,d\theta = \int |f-Q_n|^{p_0} w\,d\theta \to 0
\]
as $n\to \infty$. This shows $f \in H^{p_0}_w$ (since it is initially
defined as the closure of the analytic polynomials in
$L^{p_0}_w(\T)$).

Conversely, we have seen that if $f \in H^{p_0}_w$, then $f \in L^p(\T)$
for some $p>1$.  Factor $w=|h|^{p_0}$ as before. Then, $fh \in
H^{p_0}$ and $1/h$ is outer, so that $f=fh(1/h) \in N^{+}$.  Since $f
\in L^p(\T)$, we can then conclude that $f \in H^p$.  

The proof of Theorem \ref{thm:Hp>0}, which claims for $0<p_0<\infty$
\[
\bigcup_{p>0} H^p=\bigcup_{w\in A_\infty} H^{p_0}_w,
\]
is similar once we know the corresponding fact for $L^p(\T)$
spaces. Indeed, take $f \in H^p$ for some $p>0$. There exists $w \in
A_{\infty}$ such that $f \in L^{p_0}_{w}(\T)$ by Theorem
\ref{thm:localmaininfty}.  Factor $w = |h|^{p_0}$ with outer $h \in
H^{p_0}$.  Then, $f \in H^{p_0}_w$ as above using Gamelin's result.
The converse is similar to the previous proof.  
\end{proof}

\section{The global case} \label{global}
In this section we address the case when our functions are defined on all of $\R^n$.  Let us first prove Theorem \ref{thm:A1L1union}, which states that
$$\bigcup_{w\in A_p}L^p_w(\R^n)=\M_{A_1}(\R^n)\cap \bigcup_{w\in A_1}L^1_w(\R^n).$$

 \begin{proof}[Proof of Theorem \ref{thm:A1L1union}] We will prove the containment each direction

 First we show $$\M_{A_1}(\R^n)\cap \bigcup_{w\in A_1}L^1_w(\R^n)\subset \bigcup_{w\in A_{p}}L^{p}_w(\R^n).$$  
 Suppose $w$ is an $A_1$ majorant of $f$ and $f\in L^1_u(\R^n)$ for some $u\in A_1(\R^n)$.   By Theorem \ref{thm:properties} $uw^{1-p}\in A_{p}(\R^n)$ and
 $$\int_{\R^n}|f|^{p}w^{1-p}u\,dx\leq \int_{\R^n}|f|u\,dx.$$
To see the reverse containment suppose that $f\not\equiv 0$, belongs to $L^{p}_w(\R^n)$ for some $w\in A_{p}(\R^n)$.  We will use the fact that $w\in A_p(\R^n)$ implies $M\in \B(L^p_w)$ to apply the Rubio de Francia algorithm: 
$$Rf=\sum_{k=0}^\infty \frac{M^k f}{2^k\|M\|^k_{\B(L^{p}_w)}}.$$
Then $Rf$ is an $A_1$ majorant of $f$ so $f\in \M_{A_1}(\R^n)$.  Also let $g$ be any function in $L^{p'}_\sigma(\R^n)$ where $\sigma=w^{1-p'}$ satisfying $\|g\|_{L^{p'}_\sigma(\R^n)}~=~1$.  Again, since $\sigma\in A_{p'}(\R^n)$, we apply the Rubio de Francia algorithm
$$Rg=\sum_{k=0}^\infty \frac{M^kg}{2^k\|M\|_{\B(L^{p'}_\sigma)}^k},$$
so that $Rg$ is in $A_1(\R^n)$ and $\|Rg\|_{L^{p'}_\sigma(\R^n)}\leq 2$.  Hence
\begin{multline*}\int_{\R^n}|f|Rg\,dx=\int_{\R^n}|f|w^{1/p}Rg w^{-1/p}\,dx \\
\leq \|f\|_{L^{p}_w(\R^n)}\|Rg\|_{L^{p'}_\sigma(\R^n)}\leq 2  \|f\|_{L^{p}_w(\R^n)},\end{multline*}
showing that $f\in \bigcup_{w\in A_1}L^1_w(\R^n)$ as well.

\end{proof}

Before we move on, we remark that the intersection of $\M_{A_1}(\R^n)$ and $\bigcup_{w\in A_1}L^1_w(\R^n)$ is necessary for our result.  The function in example \ref{ex:example1} viewed as a function on $\R$ belongs to $L^1(\R)\subset \bigcup_{w\in A_1}L^1_w(\R)$, but does not belong to $L^p_w(\R)$ for any $p>1$ and $w\in A_p(\R)$ since it belongs to $L^1_{\text{loc}}(\R)\backslash\bigcup_{p>1}L^p_{\text{loc}}(\R)$.  We did not encounter this phenomenon in the local case since for a fixed cube $Q$, $\M_{A_1}(Q)\subset L^1(Q)$.

We now prove Theorem \ref{thm:BFS}.

\begin{proof}[Proof of Theorem \ref{thm:BFS}]
By Theorem \ref{thm:mainglobal}, it suffices to show 
\begin{equation}\label{eqn:BFSconAp} \bigcup_{w\in A_p} L^p_w(\R^n)\subset \bigcup\{\X:M\in \B(\X)\cap \B(\X')\}.\end{equation}
and
\begin{equation} \label{eqn:A1L1} \bigcup\{\X:M\in \B(\X)\cap \B(\X')\}\subset \M_{A_1}(\R^n)\cap \bigcup_{w\in A_1}L^1_w(\R^n).\end{equation}

 However, the containment \eqref{eqn:BFSconAp} is immediate, since 
 $$ M\in \B(L^p_w(\R^n))\Leftrightarrow w\in A_p(\R^n)\Leftrightarrow \sigma\in A_{p'}(\R^n) \Leftrightarrow M\in \B(L^{p'}_\sigma(\R^n)).$$ 
On the other hand, for containment \eqref{eqn:A1L1}, if $f\not\equiv 0$, $f\in \X$,  for some Banach function space $\X$ such that $M\in \B(\X)\cap \B(\X')$.  Then we may use the Rubio de Francia algorithm to construct an $A_1(\R^n)$ majorant:
$$Rf=\sum_{k=0}^\infty \frac{M^kf}{2^k\|M\|_{\B(\X)}^k}.$$
Then $Rf\in A_1$ and $|f|\leq Rf$ so $f\in \M_{A_1}(\R^n)$.  Given $g\in \X'$ let
$$Rg=\sum_{k=0}^\infty \frac{M^kg}{2^k\|M\|_{\B(\X')}^k},$$
so that $Rg\in A_1(\R^n)\cap \X'$ and $\|Rg\|_{\X'}\leq 2\|g\|_{\X'}$.  Then
$$\int_{\R^n}|f|Rg\,dx\leq \|f\|_\X\|Rg\|_{\X'}\leq 2\|f\|_{\X}\|g\|_{\X'}$$
so $f\in \bigcup_{w\in A_1}L^1_w(\R^n)$.
\end{proof}
When $p>1$, $L^{p,\infty}(\R^n)$ is a Banach function space such that $M$ is bounded on $L^{p,\infty}(\R^n)$ (see \cite{GrCl}) and its associate $(L^{p,\infty}(\R^n))'=L^{p',1}(\R^n)$, the Lorentz space with exponents $p'$ and $1$, is also a space for which $M$ is bounded (see \cite{AM}).

\begin{corollary}\label{cor:weak} Suppose $1<p<\infty$ then 
$$\bigcup_{p>1}L^{p,\infty}(\R^n)\subset \bigcup_{w\in A_p}L^p_w(\R^n).$$
\end{corollary}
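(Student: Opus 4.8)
The plan is to read the corollary off directly from Theorem \ref{thm:BFS}, using only the two boundedness facts recorded in the paragraph immediately preceding the statement. Fix $p$ with $1<p<\infty$ as in the statement and take an arbitrary element of the left-hand union, so that $f\in L^{q,\infty}(\R^n)$ for some $q>1$. I would set $\X=L^{q,\infty}(\R^n)$ and observe that, by the facts just cited from \cite{GrCl} and \cite{AM}, $\X$ is a Banach function space on which $M$ is bounded, while its associate space $\X'=L^{q',1}(\R^n)$ is also a space on which $M$ is bounded; in short, $M\in\B(\X)\cap\B(\X')$.

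Consequently $f$ belongs to the union $\bigcup\{\X:M\in\B(\X)\cap\B(\X')\}$ appearing on the right-hand side of Theorem \ref{thm:BFS}. Applying that theorem identifies this union with $\bigcup_{w\in A_p}L^p_w(\R^n)$, which is exactly the desired containment. It is worth noting that the right-hand side of Theorem \ref{thm:BFS} does not depend on $p$, in accordance with the Corollary following Theorem \ref{thm:A1L1union}; hence the fixed exponent $p$ in the corollary is genuinely arbitrary and serves only to name the target space, while the exponent $q$ of the weak space containing $f$ is unrelated to it.

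Since all of the substance is already contained in Theorem \ref{thm:BFS} (through the Rubio de Francia algorithm used to establish the containment \eqref{eqn:A1L1}) together with the two cited results guaranteeing $M\in\B(L^{q,\infty})$ and $M\in\B(L^{q',1})$, there is no genuine obstacle to overcome in this argument. The only points requiring care are the notational clash between the fixed exponent $p$ and the union index on the left (which I would resolve by renaming the latter $q$), and the correct identification of the associate space as $(L^{q,\infty})'=L^{q',1}$ rather than $L^{q'}$ or $L^{q',\infty}$, since it is the boundedness of $M$ on this precise Lorentz space that makes $L^{q,\infty}$ an admissible choice of $\X$.
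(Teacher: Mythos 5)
Your proposal is correct and is essentially the paper's own argument: the paragraph preceding the corollary records precisely that $M\in\B(L^{q,\infty})\cap\B(L^{q',1})$ with $(L^{q,\infty})'=L^{q',1}$, and the corollary is then read off from Theorem \ref{thm:BFS} exactly as you do. Your remarks on renaming the union index and on identifying the associate space correctly are sensible housekeeping but do not change the substance.
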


From Corollary \ref{cor:weak} we see that the analogous version of Theorem \ref{thm:localmain} is not true on $\R^n$.  This follows since
$$\bigcup_{p>1}L^p(\R^n)\subsetneqq \bigcup_{p>1}L^{p,\infty}(\R^n)$$
for example $f(x)=|x|^{-n/2}\in L^{2,\infty}(\R^n)$ but $f\notin \bigcup_{p>0}L^p(\R^n)$.

We also remark that the techniques required for $\R^n$ are completely different than the local case.  For example, to prove the containment
$$\bigcup_{p>1}L^{p,\infty}(\R^n)\subset \M_{A_1}(\R^n)$$
it is not enough to simply dominate $|f|$ by $M(|f|^p)^{1/p}$.  Indeed for $f\in L^{p,\infty}(\R^n)$ we have $g=|f|^p\in L^{1,\infty}(\R^n)$, but $Mg$ may not be finite for $g\in L^{1,\infty}(\R^n)$ (take $g(x)=|x|^{-n}$ for example).  Instead we must refine our construction of an $A_1$ majorant using the techniques of Rubio de Francia \cite{Rub1}.

We now provide examples to show that the inclusions in Theorem \ref{thm:mainglobal} are proper.  We first show that the second inclusion is proper, i.e.,
$$\bigcup_{w\in A_p} L^p_w(\R^n)\subsetneqq \M_{A_1}(\R^n).$$
Since 
$$\bigcup_{w\in A_p} L^p_w(\R^n)=\M_{A_1}(\R^n)\cap \bigcup_{w\in A_1}L^1_w(\R^n),$$
it suffices to find a function in $\M_{A_1}(\R^n)\backslash \Big( \bigcup_{w\in A_1}L^1_w(\R^n)\Big)$.
\begin{example} The function $f(x)=1$ belongs to $\M_{A_1}(\R^n)\backslash \bigcup_{w\in A_1} L^1_w(\R^n)$.  To prove this we need the following fact: if $w\in A_\infty$ then $w\notin L^1(\R^n)$.  Suppose not, that is, suppose $w\in A_\infty(\R^n)\cap L^1(\R^n)$.  By Theorem \ref{thm:rh}, there exists $s>1$ such that
$$\Big(\frac{1}{|Q|}\int_Q w^s\,dx\Big)^{1/s}\leq \frac{2}{|Q|}\int_{Q} w\,dx.$$
Let $Q_N=[-N,N]^n$, then
\begin{multline*}\Big(\frac{1}{|Q_N|}\int_{Q_1}w^s\,dx\Big)^{1/s} \leq \Big(\frac{1}{|Q_N|}\int_{Q_N}w^s\,dx\Big)^{1/s}\\ \leq \frac{2}{|Q_N|}\int_{Q_N} w\,dx\leq \frac{2}{|Q_N|}\|w\|_{L^1(\R^n)}.\end{multline*}
Letting $N\ra \infty$ we arrive at a contradiction.  Finally to see $1~\notin~\bigcup_{w\in A_1}~L^1_w(\R^n)$, notice that $1\in L^1_w(\R^n)$ if and only if $w\in L^1(\R^n)$.

\end{example}
 
 Next we show that 
 $$\bigcup_{p>1}L^{p,\infty}(\R^n)\subsetneqq \bigcup_{w\in A_p}L^p_w(\R^n).$$
\noindent For this example we need the following lemma.
\begin{lemma} \label{lem:minmaxA1}Suppose $u,v\in A_1(\R^n)$ then
$$\max(u,v)\in A_1(\R^n) \qquad \text{and} \qquad \min(u,v)\in A_1(\R^n).$$
\end{lemma}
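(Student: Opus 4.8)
The plan is to establish the two statements separately, handling the maximum and minimum via the definition of $A_1$. Recall that $w \in A_1(\R^n)$ means $Mw \leq C w$ a.e., which is equivalent to saying that for every cube $Q$ and a.e.\ $x \in Q$, the average $\frac{1}{|Q|}\int_Q w \, dy \leq C w(x)$; in other words $\frac{1}{|Q|}\int_Q w\,dy \leq C \operatorname*{ess\,inf}_Q w$. Let $C_u$ and $C_v$ denote the $A_1$ constants of $u$ and $v$ respectively, and set $C = \max(C_u, C_v)$.

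For the maximum, first I would observe the pointwise bound $\max(u,v) \leq u + v$, so that for any cube $Q$,
\[
\frac{1}{|Q|}\int_Q \max(u,v)\,dy \leq \frac{1}{|Q|}\int_Q u\,dy + \frac{1}{|Q|}\int_Q v\,dy \leq C\big(\operatorname*{ess\,inf}_Q u + \operatorname*{ess\,inf}_Q v\big).
\]
Now for a.e.\ $x \in Q$ we have $\operatorname*{ess\,inf}_Q u \leq u(x) \leq \max(u,v)(x)$ and likewise $\operatorname*{ess\,inf}_Q v \leq \max(u,v)(x)$, so the right-hand side is at most $2C\max(u,v)(x)$. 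This gives $M_Q\big(\max(u,v)\big)(x) \leq 2C \max(u,v)(x)$ after taking the supremum over cubes $Q \ni x$, which is precisely the $A_1$ condition for $\max(u,v)$.

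For the minimum the additive bound runs the wrong way, so instead I would use the infimum characterization directly. For a cube $Q$ and a.e.\ $x \in Q$, using $\min(u,v) \leq u$ and $\min(u,v) \leq v$,
\[
\frac{1}{|Q|}\int_Q \min(u,v)\,dy \leq \min\Big(\frac{1}{|Q|}\int_Q u\,dy,\ \frac{1}{|Q|}\int_Q v\,dy\Big) \leq C\min\big(\operatorname*{ess\,inf}_Q u,\ \operatorname*{ess\,inf}_Q v\big).
\]
The final quantity equals $C \operatorname*{ess\,inf}_Q \min(u,v)$, and since $\operatorname*{ess\,inf}_Q \min(u,v) \leq \min(u,v)(x)$ for a.e.\ $x \in Q$, taking the supremum over $Q \ni x$ yields $M\big(\min(u,v)\big) \leq C\min(u,v)$ a.e., establishing $\min(u,v) \in A_1(\R^n)$.

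The only mild subtlety, and the step I would be most careful about, is the equivalence between the maximal-function definition of $A_1$ and the averaging-over-cubes formulation involving the essential infimum; this is standard but should be invoked cleanly, since the whole argument rests on comparing cube averages to pointwise lower bounds. Both parts are then short computations, so I expect no serious obstacle beyond keeping the direction of the inequalities straight (the minimum case being the one where the naive additive estimate fails).
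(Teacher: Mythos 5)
Your proof is correct and follows essentially the same route as the paper's: subadditivity of $M$ together with $\max(u,v)\leq u+v\leq 2\max(u,v)$ for the maximum, and the essential-infimum characterization of $A_1$ for the minimum. The only cosmetic difference is that you replace the paper's two-case comparison of $\operatorname*{ess\,inf}_Q u$ and $\operatorname*{ess\,inf}_Q v$ with the identity $\min(\operatorname*{ess\,inf}_Q u,\operatorname*{ess\,inf}_Q v)=\operatorname*{ess\,inf}_Q \min(u,v)$, which amounts to the same thing.
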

\begin{proof} 
To see that $\max(u,v)$ is in $A_1(\R^n)$ note that $\max(u,v)\leq u+v\leq 2\max(u,v)$ hence
$$M(\max(u,v))\leq Mu+Mv\leq C(u+v)\leq 2C\max(u,v).$$
To prove $\min(u,v)\in A_1(\R^n)$ we use the equivalent definition of $A_1(\R^n)$:
$$w\in A_1(\R^n) \Leftrightarrow \dashint_Q w\,dx\leq C\inf_Q w\quad
\forall Q\subset\R^n$$ where the infimum is the essential infimum of $w$ over the cube $Q$.
Set $w=\min(u,v)$ and let $Q$ be a cube.  Notice that $\inf_Q u>\inf_Q
v$ implies $\inf_Qw=\inf_Qv$ and hence
$$\dashint_Q w\,dx\leq \dashint_Q v\leq C\inf_Q v=C\inf_Q w.$$
On the other hand if $\inf_Q u\leq\inf_Q v$ then $\inf_Qw=\inf_Q u$ and so
$$\dashint_Qw\,dx\leq \dashint_Q u\,dx\leq C\inf_Q u=C\inf_Qw.$$
So $w\in A_1(\R^n)$.
\end{proof}
\begin{example}
 Let $f(x)=\max(|x|^{-\al n },|x|^{-\beta n})$.  If  $0<\al<\beta<1$ then $f\notin \bigcup_{p>0}L^{p,\infty}(\R^n)$.  However, 
$$|f(x)|\leq w(x)$$
where $w(x)=\max(|x|^{-\beta n},1)$ and $f\in L^1_u(\R^n)$ where $u(x)=\min(|x|^{-\gamma}, 1)$ when $1-\al<\gamma<1$.  By Lemma \ref{lem:minmaxA1} $u$ and $w$ belong to $A_1(\R^n)$.  Thus 
$$f\in \M_{A_1}(\R^n)\cap \bigcup_{w\in A_1}L^1_w(\R^n)=\bigcup_{w\in A_p}L^p_w(\R^n).$$
\end{example}

\begin{example}[Proof of Theorem \ref{thm:globalApmajor}]  Let $p>1$ and $0<\al<n(p-1)$.  Now consider the function $f(x)=|x|^\al$.  Then $f\in A_p(\R^n)\subset \M_{A_p}(\R^n)$, but $f\notin \M_F(\R^n)$ so in particular, $f\notin \M_{A_1}(\R^n)$.  To see this notice that for every $x\in \R^n$, and $r>|x|$
$$Mf(x)\geq \frac{c}{r^n}\int_{|x|\leq r}|x|^\al\,dx \simeq r^\al$$
so $Mf\equiv \infty$.
\end{example}

 \begin{proof}[Proof of Theorem \ref{thm:globalAinfty}]  Since $A_1(\R^n)\subset A_\infty(\R^n)$ and $A_1(\R^n)\subset \M_F(\R^n)$ we have $\M_{A_1}(\R^n)\subset \M_{A_\infty^F}(\R^n)$.  On the other hand if $f$ is dominated by a weight $w\in A_\infty^F(\R^n)=A_\infty(\R^n)\cap \M_F(\R^n).$  Then, by Theorem \ref{thm:rh} we have
 $$M(w^s)^{1/s}\leq 2Mw<\infty \qquad a.e.$$
for some $s>1$. So in particular $|f|\leq M(|f|^s)^{1/s}\leq M(w^s)^{1/s}\in A_1(\R^n)$.
 
 \end{proof} 
 
\begin{proof}[Proof of Theorem \ref{thm:globalchar}] Let $w$ be an $A_1(\R^n)$ majorant of $f$.  Since $w\in A_1(\R^n)$, $w^s\in A_1(\R^n)$ for some $s>1$, which implies $|f|^s\in \M_{A_1}(\R^n)$.  By Theorem \ref{thm:r>1} we have $M(|f|^s)\in \M_{A_1}(\R^n)\subset L^1_{\loc}(\R^n)$.  On the other hand if there exists $s>1$ such that $M(|f|^s)<\infty$ a.e. then $M(|f|^s)^{1/s}\in A_1(\R^n)$ by Theorem \ref{thm:coifroch} and $|f|\leq M(|f|^s)^{1/s}.$
\end{proof}

Finally we end with a brief description of $\bigcup_{w\in A_1}L^1_w(\R^n)$.  
\begin{theorem} 
$$\bigcup_{w\in A_1}L^1_w(\R^n)=\bigcup\{\X: M\in \B(\X')\}=\bigcup\{\X: \X'\cap A_1(\R^n)\not=\varnothing\}$$
\end{theorem}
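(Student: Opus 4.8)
The plan is to prove the two stated equalities as the single cyclic chain of inclusions
$$\bigcup_{w\in A_1}L^1_w(\R^n)\subset \bigcup\{\X:M\in\B(\X')\}\subset \bigcup\{\X:\X'\cap A_1(\R^n)\neq\varnothing\}\subset\bigcup_{w\in A_1}L^1_w(\R^n),$$
exhibiting in each step a single Banach function space that witnesses membership in the next union. The first two inclusions are constructive (one builds the right $\X$ or the right $A_1$ weight), while the third is a one-line duality estimate, so the whole argument should be short once the associate space of $L^1_w$ is understood.

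For the opening inclusion I would start from $f\in L^1_w(\R^n)$ with $w\in A_1$ and simply take $\X=L^1_w(\R^n)$, which is a Banach function space by the discussion in Section \ref{prelim}. The key point is to identify its associate space: the associate norm $\|g\|_{\X'}=\sup\{\int_{\R^n}|fg|\,dx:\|f\|_{L^1_w}\le 1\}$ collapses, after the substitution $u=|f|w$ and the resulting $L^1$--$L^\infty$ pairing, to the essential supremum of $|g|/w$. Thus $g\in\X'$ exactly when $|g|\le Cw$ a.e. Now invoking $w\in A_1$ in the form $Mw\le Cw$: whenever $|g|\le Kw$ we get $Mg\le KMw\le CKw$, so that $\|Mg\|_{\X'}\le C\|g\|_{\X'}$. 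Hence $M\in\B(\X')$ and $f$ lies in the second union.

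The middle inclusion is where the Rubio de Francia algorithm enters. Given $f\in\X$ with $M\in\B(\X')$, I would use that $\X'$ is itself a Banach function space, so by axiom (iv) applied to $\X'$ any bounded set $B$ of positive measure has $\chi_B\in\X'$. Applying the algorithm $R\chi_B=\sum_{k=0}^\infty M^k\chi_B/(2^k\|M\|_{\B(\X')}^k)$ yields a function that is strictly positive everywhere, belongs to $A_1(\R^n)$ by construction, and satisfies $\|R\chi_B\|_{\X'}\le 2\|\chi_B\|_{\X'}<\infty$ precisely because $M\in\B(\X')$. Therefore $R\chi_B\in\X'\cap A_1(\R^n)$, this intersection is nonempty, and the same space $\X$ places $f$ in the third union.

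The closing inclusion is immediate: if $f\in\X$ and $w\in\X'\cap A_1$, then $\int_{\R^n}|f|w\,dx\le\|f\|_\X\|w\|_{\X'}<\infty$, so $f\in L^1_w(\R^n)$ with $w\in A_1$. I expect the only step with genuine content to be the first inclusion, namely the explicit identification of $(L^1_w)'$ as the weighted $L^\infty$ space $\{g:|g|\le Cw\}$ together with the verification that $M$ is bounded there; the defining inequality $Mw\le Cw$ does all the work once the associate norm has been computed. The remaining two inclusions are, respectively, a direct reuse of the Rubio de Francia construction already employed in the proofs of Theorems \ref{thm:A1L1union} and \ref{thm:BFS}, and an application of the associate-space Hölder inequality recorded in Section \ref{prelim}.
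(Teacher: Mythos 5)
Your proposal is correct and follows essentially the same route as the paper: the same cyclic chain of inclusions, with the Rubio de Francia iteration applied to a nonzero element of $\X'$ (the paper uses an arbitrary $g\in\X'$, you use $\chi_B$) to produce a weight in $\X'\cap A_1(\R^n)$, and the associate-space H\"older inequality for the closing step. The only divergence is the first inclusion, where the paper argues via ``the associate space is a closed subspace of the dual of $L^1_w$, which is $L^\infty$,'' whereas you identify $(L^1_w)'$ explicitly as $\{g:|g|\le Cw\}$ and deduce $M\in\B(\X')$ directly from $Mw\le Cw$; your version is, if anything, the more transparent one, since it makes visible exactly where the $A_1$ hypothesis on $w$ is used.
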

\begin{proof}  It is clear that that
$$\bigcup_{w\in A_1}L^1_w(\R^n)\subset\bigcup\{\X: M\in \B(\X')\},$$
since the dual space of $L^1_w(\R^n)$ is $L^\infty(\R^n)$ and $M\in \B(L^\infty)$.  The associate space is always a closed subspace of the dual space \cite{BS,Rub2}.  Suppose $f\in \X$ such that $M\in \B(\X')$ then given $g\in \X'$ with $g\not\equiv 0$ (notice Banach function spaces always contain non-zero functions by (iv)) we may define 
$$w=\sum_{k=1}^\infty \frac{M^k g}{2^k\|M\|_{\B(\X')}^k}$$
so that $w\in A_1(\R^n)$ and $\|w\|_{\X'}\leq \|g\|_{\X'}$, so $w\in  \X'\cap A_1(\R^n)$.  Finally suppose $f\in \X$ for some $\X$ such that $\X'$ contains an $A_1$ weight.  Let $w\in \X'\cap A_1(\R^n)$.  Then
$$\int_{\R^n} |f|w\,dx\leq \|f\|_{\X}\|w\|_{\X'}$$
\end{proof}

\section{Questions} \label{questions} 
We leave the reader with some open questions.

\begin{enumerate}[{\bf 1.}]
\item Let $A_p^*=\bigcap_{q>p}A_q$.  Is there a characterization of the union
$$\bigcup_{w\in A_p^*}L^p_w?$$
 In general $A_p\subsetneqq A_p^*$.  For example $w(x)=\max\big((\log|x|^{-1})^{-1},1\big)$ belongs to $A_1^*$ but not $A_1$.  Moreover, 
$$\{w:w,1/w\in A^*_1\}=\text{clos}_{BMO}L^\infty,$$
see \cite{GCRdF, JN}.  In the local case we have
$$\bigcup_{w\in A_p^*}L^p_w(Q)\subset \bigcap_{s<p}\bigcup_{r>s}L^r(Q)=\limsup_{r\ra p^-} L^r(Q).$$
Are these two sets equal?
\item Can one give a better description of $\M_{A_1}(\R^n)$?  Using the techniques of the paper it is easy to show that
$$\bigcup\{\X:M\in \B(\X)\}\subset \M_{A_1}(\R^n).$$
Are these two sets equal?  Since first posting of this article, this
question has been answered positively by Chu \cite{C}.
\item It is well known that 
$$L^1\cap L^\infty \subset \bigcap_{1<p<\infty} L^p\subset \bigcup_{1<p<\infty} L^p \subset L^1+L^\infty.$$
When can we write a function as the sum of a function in $\M_{A_1}$ and $\bigcup_{w\in A_1}L^1_w$, that is, what conditions on a function guarantee it belongs to $\M_{A_1}+\bigcup_{w\in A_1}L^1_w$?
\item What can one say about 
$$\bigcup_{w\in A_p} L^{p,\infty}_w?$$
If $w\in A_1$ and $p>1$ then $M\in \B(L^{p,\infty}_w)$, so for $p>1$
$$\bigcup_{w\in A_1} L^{p,\infty}_w\subset \M_{A_1}.$$
\item Do these results transfer to more general domains?  It is possible to consider a general open set $\Omega$ as our domain of interest.  We may define the $A_p(\Omega)$ classes, $\M_{A_1}(\Omega)$, and the Hardy-Littlewood maximal operator $M_\Omega$ exactly as before.  However, the openness results, Theorems \ref{thm:rh} and \ref{thm:open}, may not hold for $\Omega$, even if it is bounded \cite{CMP}.   In the local case we assume that weights belong to $L^1(\Omega)$.  What happens if we only assume $L^1_{\text{loc}}(\Omega)$?

\end{enumerate}

\bibliographystyle{plain}

\end{document}